\def\R{\mathbb{R}}
\def\N{\mathbb{N}}
\def\C{\mathbb{C}}
\def\F{\mathbb{Q}_q}
\def\Z{\mathbb{Z}}
\def\T{\mathbb{T}}
\def\Q{\mathbb{Q}}
\renewcommand{\O}{\mathbb{Z}_{q}}
\def\mc{\mathcal}
\newcommand{\Om}[0]{\Omega}
\newcommand{\ta}[0]{\theta}
\newcommand{\bs}[0]{\setminus}
\newcommand{\ld}[0]{\lambda}
\newcommand{\vep}[0]{\varepsilon}
\newcommand{\lsm}[0]{\lesssim}
\newcommand{\wh}[1]{\widehat{#1}}
\newcommand{\ov}[1]{\overline{#1}}
\newcommand{\st}[1]{\substack{#1}}
\newcommand{\nms}[1]{\| #1 \|}
\numberwithin{equation}{section}
\theoremstyle{plain}
\newtheorem{thm}{Theorem}[section]
\newtheorem{prop}[thm]{Proposition}
\newtheorem{lem}[thm]{Lemma}
\newtheorem{corollary}[thm]{Corollary}
\newtheorem*{conj*}{Conjecture}
\newtheorem*{openproblem*}{Open Problem}
\title{Improved discrete restriction for the parabola}
\author{Shaoming Guo, \ \ Zane Kun Li,\ \ Po-Lam Yung}
\address{Shaoming Guo: Department of Mathematics, University of Wisconsin-Madison, Madison, WI-53706, USA}
\email{shaomingguo@math.wisc.edu}
\address{Zane Kun Li: Department of Mathematics, Indiana University Bloomington, Bloomington, IN-47405, USA}
\email{zkli@iu.edu}
\address{Po-Lam Yung: Department of Mathematics, The Chinese University of Hong Kong, Shatin, Hong Kong \quad \textit{and} \quad Mathematical Sciences Institute, The Australian National University, Canberra, Australia}
\email{plyung@math.cuhk.edu.hk \quad \textit{and} \quad polam.yung@anu.edu.au}
\begin{document}
\begin{abstract}
Using ideas from \cite{GMW} and working over $\Q_p$, we show that the discrete restriction
constant for the parabola is $O_{\vep}((\log M)^{2 + \vep})$.
\end{abstract}

\maketitle

\section{Introduction}
Let $e(z) := e^{2\pi i z}$ and let $K(M)$ denote the best constant such that
\begin{align}\label{dres}
\nms{\sum_{n = 1}^{M}a_{n}e(nx_1 + n^{2}x_2)}_{L^{6}([0, 1]^2)} \leq K(M)(\sum_{n = 1}^{M}|a_n|^{2})^{1/2}
\end{align}
for all sequences of complex numbers $\{a_n\}_{n = 1}^{M}$. Trivially,
$K(M) \leq M^{1/2}$.
In 1993, Bourgain in \cite{Bourgain93} considered, among other things, the size of $K(M)$ since \eqref{dres} is associated to the periodic Strichartz inequality for the nonlinear Schr\"{o}dinger equation on the torus. He obtained that
\begin{align}\label{eq:Bourgain_ul}
(\log M)^{1/6} \lsm K(M) \leq \exp(O(\frac{\log M}{\log\log M}))
\end{align}
using number theoretic methods, in particular the upper bound follows from the divisor bound and the lower bound follows
from Gauss sums on major arcs (see also \cite{BlomerBrudern} for a precise asymptotic in the case of $a_n = 1$ of \eqref{dres}).
It is natural to ask what is the true size of $K(M)$ and whether the gap between the upper
and lower bounds can be closed.

The lower bound has not been improved since \cite{Bourgain93}. However
by improving the upper bound on the decoupling constant for the parabola,
Guth-Maldague-Wang recently in \cite{GMW} improved the upper bound in \eqref{eq:Bourgain_ul}
to $\lsm (\log M)^{C}$ for some unspecified but large absolute constant $C$.
Our main result is that $C$ can be reduced to $2+$. More precisely:

\begin{thm}\label{final}
For every $\vep > 0$, there exists a constant $C_{\vep} > 0$ such that
$$K(M) \leq C_{\vep} (\log M)^{2 + \vep}.$$
\end{thm}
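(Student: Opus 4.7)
My plan is to prove an $\F$-analogue of an $\ell^2(L^6)$ decoupling inequality for the parabola with constant $O_\vep((\log M)^{2+\vep})$, following the high-low scheme of \cite{GMW} adapted to the ultrametric setting, and then transfer back to the integer sum in \eqref{dres}.

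First, I fix a prime $q$ and define the $\F$-extension operator for the parabola,
\[
Eg(\mathbf{x}) := \int_{\O} g(\xi)\, \psi(\xi x_1 + \xi^2 x_2)\, d\xi, \qquad \mathbf{x} \in \qq^2,
\]
where $\psi$ is the standard additive character on $\F$ and the integral is with respect to Haar measure on $\O$. Identifying $\{1, \ldots, M\}$ with a subset of $\O$ via its $q$-adic expansion, a routine orthogonality calculation converts the left-hand side of \eqref{dres} into $\|Eg_a\|_{L^6(B)}$ for an appropriate ball $B \subset \TT^2$ and a step function $g_a$ encoding the coefficients $a_n$. Theorem~\ref{final} thereby reduces to a sharp $\ell^2 \to L^6$ decoupling estimate for $E$ at scale $\sim M^{-2}$, into caps of radius $\sim M^{-1}$.

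The core of the argument is an $\F$-analogue of the GMW high-low iteration. At each $q$-adic dyadic scale $Eg$ is split into a broad and a narrow part. In the narrow regime, $q$-adic parabolic rescaling is an \emph{exact} isometry of $\qq^2$, so the inductive hypothesis applies with no overlap error. In the broad regime one combines a refined Kakeya-type inequality for $\F$-parabolic tubes with the high-frequency square function bound. The quantitative task is to arrange that the cumulative log loss is $(\log M)^{2+\vep}$: one log factor coming from the square-function step and one from the recursion over scales, with the residual $\vep$ absorbing the pigeonholing over scales.

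The principal obstacle is the precise bookkeeping of these log losses. To land on $2+\vep$ rather than a larger exponent, the broad/narrow balance must be driven by a refined, rather than a flat, high-low decomposition, and care is required to avoid introducing spurious logarithms through Cauchy--Schwarz in intermediate steps. The ultrametric structure of $\F$ should help decisively here: caps at different scales are genuinely nested or disjoint, parabolic rescaling is an exact isometry, and the boundary-overlap errors that burden the Euclidean GMW argument simply do not arise. Once the $\F$-decoupling constant is in hand, transferring back to \eqref{dres} via the embedding $\{1, \ldots, M\} \hookrightarrow \O$ is a routine change of variables.
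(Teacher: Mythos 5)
Your overall strategy coincides with the paper's: prove a decoupling-type inequality for the parabola over $\F$ by a $q$-adic adaptation of the Guth--Maldague--Wang high/low method, then transfer to \eqref{dres} via the embedding $\{1,\dots,M\}\hookrightarrow\O$ and the evenness of the exponent $6$. The transfer step you call routine is indeed carried out in Section \ref{decimpliesres} essentially as you describe. However, as written your proposal has a genuine gap: the entire quantitative core of the theorem is asserted rather than proved, and the assertion is not accurate. You claim the final constant arises from ``one log factor coming from the square-function step and one from the recursion over scales.'' In the actual argument the exponent $2+\vep$ is obtained as $(12+\vep)/6$ after raising to the sixth power, and those $12$ logarithms come from at least six distinct sources: three from the Whitney/bilinearization step, one from the number of scales in the broad/narrow iteration, one from pigeonholing the level sets, one from decomposing $\F^2$ into the sets $\Om_k$ and $L$, two from dominating $g_k^2$ by $(\log R)^2|g_k^h|^2$ on $\Om_k$, and four from the factor $\ld^2$ in the pruning step (with $\ld \sim (\log R)^2 q^{r/2}\|g_J\|_{L^\infty}/\alpha$). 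Reaching $2+\vep$ also requires structural choices you do not mention, e.g.\ taking the ratio of consecutive scales to be $q^r \lsm (\log R)^{\vep}$ so that the many Cauchy--Schwarz and cap-counting losses each cost only $(\log R)^{O(\vep)}$. Without executing this bookkeeping, the exponent $2+\vep$ is unsubstantiated; a ``flat'' implementation of the GMW scheme yields a large unspecified power of $\log$.

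A second, more structural concern: you propose to prove a genuine $\ell^2 L^6$ decoupling inequality with constant $O_{\vep}((\log M)^{2+\vep})$. The paper deliberately does \emph{not} do this. It proves only the mixed estimate \eqref{L2Linfty_eq}, bounding $\int|F|^6$ by $(\log R)^{12+\vep}(\sum_\tau\|F_\tau\|_{L^\infty}^2)^2(\sum_\tau\|F_\tau\|_{L^2}^2)$, and notes that upgrading to a true $\ell^2 L^6$ decoupling would cost additional logarithms. The mixed form suffices for Theorem \ref{final} because for the specific $F$ built from the coefficients $a_n$ both $\|F_\tau\|_{L^\infty}$ and $\|F_\tau\|_{L^2}$ are computed exactly. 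So if you insist on the full $\ell^2 L^6$ statement with constant $(\log M)^{2+\vep}$, you are claiming something stronger than what the paper's method delivers, and you would need a new idea to avoid the extra losses; if you instead target the mixed estimate, you should say so and supply the level-set and high/low analysis (Propositions \ref{prop:bilinear}--\ref{prop:newprop34}) that actually produces the constant.
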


Our proof of Theorem \ref{final} will rely on a decoupling theorem for the parabola in $\Q_{p}$.
Previous work on studying discrete restriction using decoupling relied on proving decoupling theorems
over $\R$ (see for example \cite{BD, BDG, GMW, Li18}). Here, we will broadly follow the proof in \cite{GMW}
except to efficiently keep track of the number of logs we will prove a decoupling theorem over $\Q_p$ rather than over $\R$.
Additionally we will introduce some extra efficiencies to their argument to decrease the number of logs even further.

Working in $\Q_p$ has two benefits. First, the Fourier transform
of a compactly supported function is also compactly supported
and hence this allows us to rigorously and efficiently apply the uncertainty principle which is just a heuristic in $\R$.
Second, since 6 is even, decoupling in $\Q_p$ still implies discrete restriction estimates.

To avoid confusing the $p$ in $\Q_{p}$ with the $p$ in $L^{p}$ norm, henceforth we will replace the $p$ in $\Q_{p}$ with $q$.

Let $q$ be a fixed odd prime. Let $|\cdot |$ be the $q$-adic norm associated to $\F$.
We omit the dependence of this norm on $q$.
This is a slight abuse of notation as we will use the same notation for the absolute value on $\C$, as well as the length of a $q$-adic interval. However, the meaning of the symbol will be clear from context.
In Section \ref{basic}, we summarize all relevant facts of $\F$ that we make use of.
See Chapters 1 and 2 of \cite{Taibleson} and Chapter 1 (in particular Sections 1 and 4) of \cite{VVZ} for a more complete discussion of analysis on $\F$.

For $\delta \in q^{-\N}$, we write
\begin{equation*}
\Xi_{\delta} = \{(\xi,\eta) \in \F^2 \colon \xi \in \O, |\eta - \xi^2| \leq \delta\}.
\end{equation*}
For a Schwartz function $F: \F^{2} \rightarrow \C$ and an interval $\tau \subset \O$, let $F_{\tau}$ be defined by
$\wh{F_{\tau}} := \wh{F} \, 1_{\tau \times \F}$.
Our main decoupling theorem is as follows and is the $\F$ analogue of Theorem 1.2 of \cite{GMW}.
\begin{thm} \label{thm:L2Linfty_intro}
For every odd prime $q$ and every $\varepsilon > 0$, there exists a constant $C_{\varepsilon,q}$, such that whenever $R \in q^{2\N}$ and a Schwartz function $F \colon \F^2 \to \C$ has Fourier support contained in $\Xi_{1/R}$, one has
\begin{equation}\label{L2Linfty_eq}
\int_{\F^2} |F|^6 \leq C_{\varepsilon,q} (\log R)^{12+\varepsilon} (\sum_{|\tau| = R^{-1/2}} \|F_{\tau}\|_{L^{\infty}(\F^2)}^2)^2 (\sum_{|\tau| = R^{-1/2}} \|F_{\tau}\|_{L^2(\F^2)}^2).
\end{equation}
Here the sums on the right hand side are over all intervals $\tau \subset \O$ with length $R^{-1/2}$.
\end{thm}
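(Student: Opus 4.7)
I would prove Theorem \ref{thm:L2Linfty_intro} by adapting the high-low frequency decomposition argument of Guth--Maldague--Wang \cite{GMW} to the $\F$ setting. Two features of $\F$ are crucial here: the Fourier transform of a compactly supported function is itself compactly supported, so the uncertainty principle becomes exact rather than heuristic; and any two $q$-adic intervals are either disjoint or nested. Together these features let one replace the wave packet decomposition of \cite{GMW} by an honest partition of unity and entirely avoid the pigeonhole and Schwartz-tail losses that are incurred at every scale in the $\R$ argument.

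\textbf{Induction on scales.} The bound will be proved by induction on the scale $R \in q^{2\N}$. Let $D(R)$ denote the best constant in \eqref{L2Linfty_eq}; the aim is $D(R) \leq C_{\vep,q}(\log R)^{12+\vep}$. The inductive step rests on a high-low decomposition of $|F|^2$ at an intermediate $q$-adic scale $\rho \in [R^{-1/2},1]$. Writing
\begin{equation*}
|F|^2 = \sum_{|\tau|=R^{-1/2}} |F_\tau|^2 + \sum_{\tau_1 \neq \tau_2} F_{\tau_1}\overline{F_{\tau_2}} =: g + H,
\end{equation*}
I split $H = H_\ell + H_h$ according to whether the Fourier support of each summand lies inside or outside a $q$-adic ball of radius $\rho$ around the origin. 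Because each off-diagonal term $F_{\tau_1}\overline{F_{\tau_2}}$ has Fourier support in an annular region of $\F^2$, in the $\F$ setting this decomposition is an exact partition with no boundary losses.

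\textbf{Main iteration.} The high-frequency piece $H_h$ has Fourier support in a thin annulus about the parabola at a coarser scale $\rho$, where local $L^2$ orthogonality between caps of size $\sqrt{\rho}$ yields an $L^6$ bound with only constant loss. The low-frequency piece $H_\ell$ is pointwise comparable to a coarser square function $\sum_{|\sigma|=\sqrt{\rho}}|F_\sigma|^2$, which is controlled by the inductive hypothesis applied at the smaller scale $R' = \rho^{-1}$. Choosing $\rho$ and the high-low threshold appropriately and iterating through the $\sim \log_q R$ admissible scales should yield the bound $\int |F|^6 \leq C_{\vep,q} (\log R)^{12+\vep} \int g^3$. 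Passing from this square-function-type bound to the $L^2 L^\infty$ form on the right-hand side of \eqref{L2Linfty_eq} is immediate from expanding $\int g^3$ and applying
\begin{equation*}
\int |F_{\tau_1}|^2 |F_{\tau_2}|^2 |F_{\tau_3}|^2 \leq \|F_{\tau_1}\|_\infty^2 \|F_{\tau_2}\|_\infty^2 \|F_{\tau_3}\|_2^2
\end{equation*}
term by term, then summing over $\tau_1,\tau_2,\tau_3$.

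\textbf{Main obstacle.} The delicate point is the careful bookkeeping needed to keep the exponent of $\log R$ down to $12+\vep$ rather than the unspecified $C$ of \cite{GMW}. This demands sharp choices of the intermediate scale $\rho$ and of the high-low threshold at each step, together with a clean accounting of how many scales actually contribute. The $\F$ setting is essential here: because every Fourier-support condition is enforced exactly, one loses nothing in the transition between physical and frequency space and only the honest high-low iteration contributes powers of $\log R$. A secondary technical point will be an $\F$-analogue of the locally constant heuristic, which should be automatic once one notes that a function whose Fourier transform is supported in a $q$-adic ball is literally constant on cosets of the dual ball.
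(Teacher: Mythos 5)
You have correctly identified the overall strategy (adapting the Guth--Maldague--Wang high/low method to $\F$, where the uncertainty principle is exact) and the final step of passing from a square-function-type bound to the $L^2L^\infty$ form is fine. But the core of the proof is missing, and several of the steps you do describe would not work as stated. First, the paper does not run an induction on the decoupling constant $D(R)$, and it does not apply the high/low splitting to the off-diagonal part $H=\sum_{\tau_1\neq\tau_2}F_{\tau_1}\overline{F_{\tau_2}}$. The argument first \emph{bilinearizes}: a Whitney decomposition of $\O^2$ and a broad/narrow iteration reduce matters to estimating $\int\max_{\tau_1\neq\tau_1'}|f_{\tau_1}f_{\tau_1'}|^3$ over a ``broad'' set, and then, after pigeonholing, to a level set estimate for $U_\alpha(f)=\{x: \max_{\tau_1\neq\tau_1'}|f_{\tau_1}f_{\tau_1'}|^{1/2}\sim\alpha,\ \dots\}$. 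This bilinear separation is what makes the bilinear restriction lemma and the tube-intersection bound $|T_1\cap T_2|\leq \delta^2\kappa^{-1}|T_1||T_2|$ available, and the Whitney decomposition, the number of scales, and the pigeonholing in $\alpha$ are three of the explicitly itemized sources of logs. None of this appears in your outline.

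Second, your description of the high and low pieces is not correct. The Fourier transform of $|F|^2$ is supported near a difference set of the parabola neighborhood, not ``in a thin annulus about the parabola,'' and there is no $L^6$ bound for the high part ``with only constant loss.'' In the actual argument the high/low splitting is applied to square functions $g_k=\sum_{\tau_k}|f_{k+1,\tau_k}|^2$ built from \emph{pruned} wave packets (those with $\|1_{T_k}f_{k+1,\tau_k}\|_\infty\leq\lambda$); the Low Lemma says $g_k^l\leq g_{k+1}$ (the off-diagonal terms contribute nothing to the low part), and the High Lemma is an $L^2$ estimate $\int|g_k^h|^2\leq q^{r/2}\sum_{\tau_k}\int|f_{k+1,\tau_k}|^4$, which is useless without the pruning threshold $\lambda$ that converts $\int|f_{k+1,\tau_{k+1}}|^4$ into $\lambda^2\int|f|^2$. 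The choice of $\lambda$ in terms of $\alpha$ and $\|g_J\|_\infty$ accounts for four of the twelve logs. Finally, your plan to ``iterate through the $\sim\log_q R$ admissible scales'' with a per-scale loss would compound to a power of $R$ unless the per-scale losses are of the form $(1-(\log R)^{-1})^{-1}$; the paper arranges exactly this by taking the ratio of consecutive scales to be $q^r\sim(\log R)^\varepsilon$, and only the decomposition of $\F^2$ into the sets $\Omega_k$ and $L$ (one log) plus the other itemized sources produce the final exponent $12+\varepsilon$. As written, your proposal does not contain the mechanisms that produce, or control, any of these losses.
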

This theorem is proved in Sections \ref{log12:setup}-\ref{highlow}.
We will in fact show this theorem with $\varepsilon$ replaced by $10\varepsilon$.
Since 6 is even, Theorem \ref{thm:L2Linfty_intro} once again immediately implies Theorem \ref{final} (as we prove in Section \ref{decimpliesres}).

The 12 powers of log in \eqref{L2Linfty_eq} can be accounted for as follows.
Reducing from \eqref{L2Linfty_eq} to the level set estimate (Proposition \ref{prop:newprop34}) costs 5 logs. They come from: 3 logs from the Whitney decomposition in Section \ref{whitney}, 1 log from the number of scales in deriving \eqref{eq:broad_final}, and 1 log from pigeonholing to derive \eqref{3.6}.
The level set estimate itself costs 7 logs. These come from:
1 log since we decompose $\F^2$ into sets $\Om_k$ and $L$ in Section \ref{7.3decomp} and \eqref{eq:goal2},
2 logs to control
$g^{2}_k$ by $|g_{k}^{h}|^{2}$ on $\Om_k$ in \eqref{eq:7.55}, and 4 logs from the appearance of $\lambda^{2}$
in \eqref{eq:lambda2appear} (also see \eqref{eq:lambda}).

In addition to efficiencies introduced by working with the uncertainty principle $q$-adically, we introduce a Whitney decomposition,
much like in \cite{GLYZK}, which allows us to more efficiently reduce to a bilinear decoupling problem.
Additionally compared to \cite{GMW}, the ratio between our successive scales $R_{k + 1}/R_{k}$ is of size $O((\log R)^{\vep})$ rather than
in $O((\log R)^{12})$ which allows for further reductions (we essentially have $O(\vep^{-1})$ times many more scales than in \cite{GMW}).
Note that \eqref{L2Linfty_eq} is not a true $\F$ analogue of a
$l^2 L^6$ decoupling theorem for the parabola. At the cost of a
few more logs, a similar argument as in Section 5 of \cite{GMW} would allow us
to upgrade to an actual
$l^2 L^6$ decoupling theorem, however \eqref{L2Linfty_eq} is already
enough for discrete restriction for the parabola.

Since $p$-adic intervals correspond to residue classes it may be possible to rewrite the proof of Theorem \ref{thm:L2Linfty_intro} in the language of congruences and compare it with efficient congruencing \cite{WooleyNested}.
However we do not attempt this here. For more connections between efficient congruencing and decoupling see \cite{GLY, GLYZK, Li18, Pierce}.

In this paper we consider decoupling over $\Q_p$. However one can also consider
the restriction and Kakeya conjectures over $\Q_p$ (or alternatively over
more general local fields). We refer the interested
reader to \cite{HickmanWright} and the references therein for more discussion.

For the rest of the paper, for two positive expressions $X$ and $Y$, we write $X \lsm Y$ if $X \leq C_{\vep, q} Y$ for some constant $C_{\vep, q}$ which is allowed
to depend on $\vep$ and $q$. We write $X \sim Y$ if $X \lsm Y$ and $Y \lsm X$.
Additionally by writing $f(x) = O(g(x))$, we mean $|f(x)| \lsm g(x)$. Finally, we say that $f$ has Fourier support in $\Om$ if its Fourier transform $\wh{f}$ is supported in $\Om$.

\subsection*{Acknowledgements}
SG is supported by NSF grant DMS-1800274. ZL is supported by NSF grant DMS-1902763. PLY is partially supported by a Future Fellowship FT200100399 from the Australian Research Council.
\section{Some basic properties of $\F$}\label{basic}
For convenience we briefly summarize some key relevant facts about $\F$.
First, for a prime $q$, $\F$ is the completion of the field $\Q$ under the $q$-adic norm, defined by $|0| = 0$ and $|q^a b/c| = q^{-a}$ if $a \in \Z$, $b, c \in \Z \setminus \{0\}$ and $q$ is relatively prime to both $b$ and $c$. Then $\F$ can be identified (bijectively) with the set of all formal series
\[
\F = \left\{\sum_{j=k}^{\infty} a_j q^j \colon k \in \Z, a_j \in \{0, 1, \dots, q-1\} \text{ for every $j \geq k$} \right\},
\]
and the $q$-adic norm on $\F$ satisfies
$|\sum_{j=k}^{\infty} a_j q^j| = q^{-k}$ if $a_k \ne 0$.

The $q$-adic norm obeys the ultrametric inequality $|x + y| \leq \max\{|x|, |y|\}$ with equality when $|x| \neq |y|$. We also define the $q$-adic norm on $\F^2$ by setting $|(x,y)| = \max\{|x|,|y|\}$ for $(x,y) \in \F^2$.

Write $\O = \{x \in \F: |x| \leq 1\}$ for the ring of integers of $\F$. This is in analogy to the real interval $[-1, 1]$.
In analogy to working over $\R$, for $a \in \O$, we will call sets of the form $\{\xi \in \O: |\xi - a| \leq q^{-b}\}$
an interval inside $\O$ of length $q^{-b}$ (so the length of an interval coincides with its diameter, i.e. maximum distance between two points in that interval). Similarly for $(c_{1}, c_{2}) \in \F^{2}$, we will call sets of the form
$\{(x, y) \in \F^{2}: |x - c_1| \leq q^{-b}, |y - c_2| \leq q^{-b}\}$ a square of side length $q^{-b}$.
Note that because the norm on $\F^{2}$ is the maximum $q$-adic norm of each coordinate, this square is the same
as $\{(x, y) \in \F^{2}: |(x, y) - (c_{1}, c_{2})| \leq q^{-b}\}$. Thanks to the ultrametric inequality, if two squares intersect, then one is contained inside the other; hence two squares of the same size are either equal or disjoint.

Observe that $\O$ is a subset of $\F$ consisting of elements of the form $\sum_{j \geq 0}a_{j}q^{j}$ where $a_{j} \in \{0, 1, \ldots, q - 1\}$.
Since each positive integer has a base $q$ representation, we may embed $\N$ into $\O$. Identifying $-1$ with the element
$\sum_{j \geq 0}(q - 1)q^{j}$ in $\O$ then allows us to embed $\Z$ into $\O$.

Note that if $\ell \in \N$, the intervals $\{\xi \in \O: |\xi - a| \leq 1/q^{\ell}\}$ for $a = 0, 1, \ldots, q^{\ell} - 1$ partition
$\O$ into $q^{\ell}$ many disjoint intervals which are pairwise disjoint and each pair of intervals are separated by distance at least $q^{-\ell + 1}$.
To see this, suppose $|\xi_1 - a| \leq q^{-\ell}$ and $|\xi_{2} - b| \leq q^{-\ell}$ for some $a \neq b$.
As $|a - b| \geq q^{-\ell + 1}$ and $|(\xi_1 - \xi_2) - (a - b)| \leq q^{-\ell}$, the equality case of
the ultrametric inequality implies that $|\xi_1 - \xi_2| = |a - b| \geq q^{-\ell + 1}$.

Next, for fixed $a \in \{0, 1, \ldots, q^{\ell} - 1\}$, the interval $\{\xi \in \O: |\xi - a| \leq 1/q^{\ell}\}$ is exactly the $\xi \in \O$
such that $\xi \equiv a \pmod{q^{\ell}}$ (meaning $q^{-\ell} (\xi - a) \in \O$). This illustrates the connection between $q$-adic intervals in $\F$ and residue classes and both point of views are useful throughout; for instance, it follows easily now that $\O$ is the union of these $q^{\ell}$ disjoint intervals.

Finally, let $\chi$ be the additive character of $\F$ that is equal to $1$ on $\O$ and non-trivial on $q^{-1} \O$ (up to isomorphism, there is essentially just one, given by
\[
\chi(x) := e\Big( \sum_{j=k}^{-1} a_j q^j \Big) \quad \text{if $x = \sum_{j=k}^{\infty} a_j q^j$}
\]
where $a_j \in \{0,\dots,q-1\}$ for all $j$).
From this, one can define the Fourier transform for $f \in L^{1}(\F)$ by $\wh{f}(\xi) := \int_{\F}f(x)\chi(-\xi x)\, dx$ for $\xi \in \F$, where $dx$ is the Haar measure on $\F$,
and we have an analogous definition for the Fourier transform in higher dimensions.
The theory of the Fourier transform
in $\F$ is essentially the same as in $\R$ and we refer the interested reader to \cite{Taibleson, VVZ} for more details.
Note that in $\F$ and in higher dimensions, linear combinations of indicator functions of intervals and squares play the analogue of Schwartz functions in the real setting. For $f, g \in L^1(\F^2) \cap L^2(\F^2)$, we have Plancherel's identity
$
\int_{\F^2} f \, \overline{g} = \int_{\F^2} \hat{f} \, \overline{\hat{g}},
$
which allows one to extend the Fourier transform to a unitary operator on $L^2(\F^2)$. We also have $\widehat{f*g} = \hat{f} \hat{g}$ for any integrable $f$ and $g$ on $\F^2$, where $(f*g)(x)$ is the convolution $\int_{\F^2} f(x-y) g(y) dy$. The inverse Fourier transform will be denoted by $\widecheck{\cdot}$, and we have $f = \widecheck{\hat{f}}$ for Schwartz functions $f$. Henceforth we will only deal with Schwartz functions on $\F^2$; note $F_{\tau}$ is Schwartz whenever $F$ is Schwartz.

\subsection{Basic geometry and the uncertainty principle}
The key property about harmonic analysis in $\F$ is that the Fourier transform of an indicator function of an interval is another
indicator function of an interval.
The key lemma is following, for a proof see p.42 of \cite{VVZ}.
\begin{lem}\label{lem:char}
For $\xi \in \F$ and $\gamma \in \Z$,
\begin{align*}
\widecheck{1_{|x| \leq q^{\gamma}}}(\xi) = \int_{|x| \leq q^{\gamma}}\chi(\xi x)\, dx = q^{\gamma}(1_{|\xi| \leq q^{-\gamma}})(\xi).
\end{align*}
\end{lem}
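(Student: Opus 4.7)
The plan is a case analysis on the relationship between $|\xi|$ and $q^{-\gamma}$. In the first case, $|\xi| \leq q^{-\gamma}$, the character $\chi(\xi x)$ will be identically $1$ on the ball of integration, so the integral just computes its volume. In the second case, $|\xi| > q^{-\gamma}$, I will force the integral to vanish via translation invariance of Haar measure. Conceptually this is the $q$-adic analogue of computing $\int_{[-T,T]} e^{2\pi i \xi x}\, dx$, but the ultrametric inequality sharpens the cutoff from approximate to exact, which is precisely the feature the paper wants to exploit.

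For the first case, the multiplicativity of the $q$-adic norm gives $|\xi x| = |\xi| |x| \leq q^{-\gamma} \cdot q^{\gamma} = 1$ for every $x$ in the ball, so $\xi x \in \O$. By the defining property of $\chi$ (trivial on $\O$), $\chi(\xi x) = 1$ on the entire domain of integration, and the integral reduces to the Haar measure of $\{|x| \leq q^{\gamma}\}$, which equals $q^{\gamma}$ under the normalization in which $\O$ has measure $1$. Since $1_{|\xi| \leq q^{-\gamma}}(\xi) = 1$ here, this matches the right-hand side.

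For the second case, assume $|\xi| \geq q^{-\gamma + 1}$ and denote the integral by $I$. Choose $x_{0} := q^{-\gamma}$; this satisfies $|x_{0}| = q^{\gamma}$ so it sits in the ball, and $|\xi x_{0}| = |\xi| q^{\gamma} \geq q > 1$, so $\xi x_{0} \notin \O$. The explicit formula for $\chi$ shows that $\chi$ is nontrivial off $\O$ (the fractional-part sum $\sum_{j<0} a_{j} q^{j}$ is a rational with denominator a power of $q$, strictly between $0$ and $1$ modulo $\Z$ whenever any $a_{j} \neq 0$), so $\chi(\xi x_{0}) \neq 1$. Because $|x_{0}| \leq q^{\gamma}$, the ball $\{|x| \leq q^{\gamma}\}$ is invariant under translation by $x_{0}$ by the ultrametric inequality; the substitution $x \mapsto x + x_{0}$ in $I$ then yields $I = \chi(\xi x_{0}) I$, forcing $I = 0$.

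The only mildly technical point is unpacking the explicit formula for $\chi$ to confirm that $\xi x_{0} \notin \O$ genuinely implies $\chi(\xi x_{0}) \neq 1$; everything else is direct from ultrametric multiplicativity and translation invariance of the Haar measure, so there is no substantial obstacle.
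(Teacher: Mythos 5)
Your proof is correct. The paper does not prove Lemma \ref{lem:char} itself but defers to \cite[p.~42]{VVZ}, and your argument (trivially integrating the character when $|\xi|\leq q^{-\gamma}$, and otherwise translating by $x_0=q^{-\gamma}$ and using $\chi(\xi x_0)\neq 1$ together with translation invariance of Haar measure to force the integral to vanish) is exactly the standard argument that reference supplies; the verification that $\chi\neq 1$ off $\O$ via the explicit formula is also handled correctly.
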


Another useful geometric fact about $\F^2$ is that curvature disappears entirely if one considers the intersection of $\Xi_{1/R}$ with a vertical strip of width $R^{-1/2}$.

\begin{lem}
For any $R \in q^{2\Z}$ and any interval $I \subset \F$ with length $|I| = R^{-1/2}$, the set $\{(\xi,\eta) \in \F^2 \colon \xi \in I, |\eta - \xi^2| \leq R^{-1}\}$ coincides with the parallelogram
\[
\{(\xi,\eta) \in \F^2 \colon |\xi - a| \leq R^{-1/2}, |\eta - 2 a \xi + a^2| \leq R^{-1}\}
\]
where $a$ is any point in $I$.
\end{lem}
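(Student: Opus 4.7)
The plan is to reduce the equivalence to a single algebraic identity and then invoke the ultrametric inequality. Since we are in $\F$, every $q$-adic interval is a ball that can be centered at any of its points: because intervals of the same size are either equal or disjoint, and $a \in I$ with $|I| = R^{-1/2}$, we have
\[
I = \{\xi \in \F : |\xi - a| \leq R^{-1/2}\}.
\]
Thus the condition $\xi \in I$ in the defining set is identical to the condition $|\xi - a| \leq R^{-1/2}$ in the parallelogram.

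The remaining content is to show that, under the hypothesis $|\xi - a| \leq R^{-1/2}$, the two conditions $|\eta - \xi^2| \leq R^{-1}$ and $|\eta - 2a\xi + a^2| \leq R^{-1}$ are equivalent. The key observation is the algebraic identity
\[
(\eta - 2a\xi + a^2) - (\eta - \xi^2) = \xi^2 - 2a\xi + a^2 = (\xi - a)^2,
\]
which under our hypothesis has $q$-adic norm at most $R^{-1}$, since the $q$-adic norm is multiplicative.

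Given this, the ultrametric inequality yields
\[
|\eta - 2a\xi + a^2| \leq \max\{|\eta - \xi^2|,\, |(\xi-a)^2|\} \leq \max\{|\eta - \xi^2|,\, R^{-1}\},
\]
and symmetrically $|\eta - \xi^2| \leq \max\{|\eta - 2a\xi + a^2|,\, R^{-1}\}$. From these two inequalities, whenever one of $|\eta - \xi^2|$ or $|\eta - 2a\xi + a^2|$ is at most $R^{-1}$, so is the other. This establishes equality of the two sets. There is no real obstacle here; the lemma is essentially a one-line verification once the identity $(\xi-a)^2 = \xi^2 - 2a\xi + a^2$ is combined with the ultrametric inequality, and the only subtlety is the (standard) fact that $q$-adic intervals admit arbitrary centers.
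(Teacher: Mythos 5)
Your proof is correct and follows essentially the same route as the paper: identify $I$ with the ball $\{|\xi-a|\leq R^{-1/2}\}$ and use the identity $(\eta-2a\xi+a^2)-(\eta-\xi^2)=(\xi-a)^2$ together with the ultrametric inequality (the paper phrases the last step as the equality $|\eta-\xi^2|=|(\eta-2a\xi+a^2)-(\xi-a)^2|$, which is the same two-sided estimate you wrote out). No gaps.
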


\begin{proof}
Let $a \in I$. The ultrametric inequality implies $I = \{\xi \in \F \colon |\xi - a| \leq R^{-1/2}\}$. Now $|\eta - \xi^2| = |\eta - a^2 - 2a(\xi-a) - (\xi-a)^2| = |(\eta - 2 a \xi + a^2) - (\xi-a)^2|$. It follows that for $\xi \in I$, i.e. if $|\xi - a| \leq R^{-1/2}$, then $|\eta - \xi^2| \leq R^{-1}$, if and only if $|\eta - 2 a \xi + a^2| \leq R^{-1}$.
\end{proof}

This motivates the following rigorous $q$-adic uncertainty prinicple, that is just a heuristic in $\R$.
\begin{lem}[Uncertainty Principle]\label{lem:uncertainty}
Let $R \in q^{2\Z}$ and $I \subset \F$ be an interval of length $|I| = R^{-1/2}$.
Define the parallelogram
\begin{align} \label{eq:parallelogram_def}
P := \{(\xi, \eta) \in \F^{2}: \xi \in I, |\eta - \xi^2| \leq R^{-1}\}
\end{align}
and the dual parallelogram
\begin{align} \label{eq:ta^ast}
T := \{(x, y) \in \F^{2}: |x + 2ay| \leq R^{1/2}, |y| \leq R\}
\end{align}
where $a$ is any point in $I$ (this is well-defined independent of the choice of $a$).
Let $f$ be Schwartz and Fourier supported in $P$. Then $|f|$ is constant on each translate of $T$.
\end{lem}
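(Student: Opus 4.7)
The plan is to prove the translation-invariance of $|f|$ directly via Fourier inversion. Using the preceding lemma, I can rewrite the Fourier support $P$ as the parallelogram
\[
P = \{(a+s, a^2 + 2as + t) : |s| \leq R^{-1/2},\ |t| \leq R^{-1}\},
\]
since $\wh{f}$ is supported in $P$ and $f$ equals the inverse Fourier transform of $\wh{f}$. The strategy is to show that for any $(x_0, y_0) \in T$ and any $(x,y) \in \F^2$,
\[
f(x + x_0, y + y_0) = \chi(x_0 a + y_0 a^2)\, f(x, y),
\]
which, since $|\chi| = 1$, gives $|f(x + x_0, y + y_0)| = |f(x,y)|$, as required.

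To set this up, I would write $f(x + x_0, y + y_0) = \int_P \wh{f}(\xi, \eta)\, \chi((x+x_0)\xi + (y+y_0)\eta)\, d\xi\, d\eta$ and factor the character as
\[
\chi(x\xi + y\eta)\cdot \chi(x_0(a+s) + y_0(a^2 + 2as + t)).
\]
Expanding the second factor gives $\chi(x_0 a + y_0 a^2)\,\chi(s(x_0 + 2ay_0))\,\chi(y_0 t)$. The bounds defining $(x_0, y_0) \in T$ now do the work: $|s(x_0 + 2ay_0)| \leq R^{-1/2}\cdot R^{1/2} = 1$ and $|y_0 t| \leq R \cdot R^{-1} = 1$, so each of these lies in $\O$, where $\chi$ is trivial by choice of the additive character. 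Pulling the remaining factor $\chi(x_0 a + y_0 a^2)$ outside the integral in $(s,t)$ (equivalently in $(\xi,\eta)$) yields the claimed identity.

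Finally, I would verify the parenthetical assertion that $T$ is independent of the choice of $a \in I$. For any other $a' \in I$, the ultrametric inequality gives $|a - a'| \leq R^{-1/2}$, so (using $|2| = 1$ because $q$ is odd) $|2(a - a')y_0| \leq R^{-1/2} \cdot R = R^{1/2}$; hence $|x_0 + 2a'y_0| \leq \max\{|x_0 + 2ay_0|, |2(a-a')y_0|\} \leq R^{1/2}$, and the reverse direction is symmetric.

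The proof is essentially a one-line computation, and I do not anticipate any real obstacle; the only thing to be careful about is to invoke the ultrametric inequality and $|2| = 1$ correctly, which is what makes the $q$-adic uncertainty principle sharp rather than merely heuristic as over $\R$.
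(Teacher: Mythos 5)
Your proof is correct. It takes a somewhat different route from the paper's, working on the frequency side rather than the physical side: the paper first computes the inverse Fourier transform $\widecheck{1_{P}}(x,y) = R^{-3/2}\chi(ax+a^2y)1_{T}(x,y)$ and then invokes the reproducing identity $f = f \ast \widecheck{1_{P}}$, showing via the ultrametric inequality that the resulting convolution integral is unchanged in modulus as the argument varies over a translate of $T$. You instead factor the character directly inside the Fourier inversion integral and arrive at the exact modulation identity $f(x+x_0, y+y_0) = \chi(x_0 a + y_0 a^2) f(x,y)$ for $(x_0,y_0) \in T$, which is slightly stronger than the stated conclusion; constancy of $|f|$ on each translate of $T$ then follows because $T$ is an additive subgroup (any two points of a translate differ by an element of $T$), a fact worth stating explicitly but immediate from the ultrametric inequality. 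The underlying computation is the same in both arguments --- the pairings $s(x_0+2ay_0)$ and $y_0 t$ land in $\O$, where $\chi$ is trivial, precisely because the bounds defining $T$ are dual to those defining $P$ --- but your packaging is the more direct of the two, while the paper's has the side benefit of recording the explicit formula for $\widecheck{1_{P}}$, which is what justifies the subsequent remarks about functions Fourier supported in squares.
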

\begin{proof}
One only needs to prove this for $I = \O$, $R = 1$ and then invoke affine invariance. Alternatively, and more directly, we have
\begin{align*}
\widecheck{1_{P}}(x, y) &= \int_{|t| \leq R^{-1}}\int_{|s - a| \leq R^{-1/2}}\chi(sx + s^{2}y)\chi(ty)\, ds\, dt\\
&= \chi(ax + a^{2}y)(\int_{|s| \leq R^{-1/2}}\chi(s(x + 2ay) + s^{2}y)\, ds)R^{-1}1_{|y| \leq R}
\end{align*}
where the last equality is by Lemma \ref{lem:char}.
Since $|y| \leq R$, $|s^{2}y| \leq 1$ and therefore $s^2 y \in \O$. As $\chi$ is trivial on $\O$, after another application of Lemma \ref{lem:char}, the above expression is equal to
$R^{-3/2}\chi(ax + a^{2}y)1_{|x + 2ay| \leq R^{1/2}, |y| \leq R} = R^{-3/2}\chi(ax + a^{2}y)1_{T}$.

Suppose $(x, y) \in (A, B) + T$ for some $(A, B) \in \F^2$.
Write $x = A + x'$ and $y = B + y'$ for some $(x', y') \in T$.
Then since $f = f \ast \widecheck{1_{P}}$, we have
\begin{align}\label{eq:unceq1}
f(x, y) = R^{-3/2}\chi(ax + a^{2}y)\int_{\F^{2}} f(z, w)\chi(-az - a^{2}w)1_{T}(x' + A - z, y' + B - w)\, dz\, dw
\end{align}
Since $|x' + 2ay'| \leq R^{1/2}$, using the ultrametric inequality, $|(x' + A - z) + 2a(y' + B - w)| \leq R^{1/2}$
if and only if $|(A - z) + 2a(B - w)| \leq R^{1/2}$. Similarly, since $|y'| \leq R$, $|y' + B - w| \leq R$
if and only if $|B - w| \leq R$. Therefore \eqref{eq:unceq1} is equal to
\begin{align*}
R^{-3/2}\chi(ax + a^{2}y)\int_{\F^2}f(z, w)\chi(-az - a^{2}w)1_{T}(A - z, B - w)\, dz\, dw.
\end{align*}
Thus $|f(x, y)|$ is independent of $(x, y) \in (A,B) + T$ and therefore $|f|$ is constant on each translate of $T$ (with a constant
that depends on $f$, $P$, $I$, and the particular translate of $T$).
\end{proof}
A similar proof as above shows that if $f$ is Fourier supported in a square of side length
$L$, then $|f|$ is constant on any square of side length $L^{-1}$. Furthermore, if $f$ is Fourier supported in a square centered at the origin of side length $L$, then $f$ itself is constant on any square of side length $L^{-1}$.

In analogy with the real setting, we will say that the parallelogram $T$ in \eqref{eq:ta^ast} has direction $(-2a, 1)$.
These parallelograms $T$ enjoy the following nice geometric properties.

\begin{lem} \label{lem:geom}
If $R \in q^{2\N}$, $I \subset \O$ is an interval with $|I| = R^{-1/2}$, and $T$ is the parallelogram defined by \eqref{eq:ta^ast} (with $a \in I$), then
\begin{enumerate}[(a)]
    \item \label{lem2.4a} each translate of $T$ is the union of $R^{1/2}$ many squares of side length $R^{1/2}$;
    \item any two translates of $T$ are either equal or disjoint;
    \item \label{lem2.4c} %
    any square of side length $R$ can be partitioned into translates of $T$.
\end{enumerate}
We write $\T(I)$ for the set of all translates of $T$. Note that $(c)$ implies that
$\F^2$ can be tiled by translates of $T$.
\end{lem}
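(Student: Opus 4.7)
The plan is to exploit the fact that $T$ is an additive subgroup of $\F^2$. Both defining conditions $|x+2ay|\leq R^{1/2}$ and $|y|\leq R$ are preserved under addition by the ultrametric inequality, so $T$ is a subgroup of $(\F^2,+)$. Part (b) is then immediate: two translates of $T$ are cosets of the same subgroup of $\F^2$, and any two cosets of a subgroup are either equal or disjoint. Throughout the argument I would use that $|2a|\leq 1$, which holds because $q$ is odd (so $|2|=1$) and $a\in I\subset\O$ (so $|a|\leq 1$).

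For part (a), I would "straighten out" $T$ using the linear change of variables $(u,v)=(x+2ay,y)$, which identifies $T$ with the axis-aligned rectangle $\{|u|\leq R^{1/2},\,|v|\leq R\}$. Partitioning its long side into cosets of $\{|v|\leq R^{1/2}\}$ in $\{|v|\leq R\}$ yields $R/R^{1/2}=R^{1/2}$ pieces which, in $(u,v)$-coordinates, are genuine squares of side $R^{1/2}$. Transforming back, the piece with $|v-v_j|\leq R^{1/2}$ becomes $\{|x+2ay|\leq R^{1/2},\,|y-v_j|\leq R^{1/2}\}$, and I would show this coincides with the honest square $\{|x+2av_j|\leq R^{1/2},\,|y-v_j|\leq R^{1/2}\}$. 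This is a one-line computation: writing $x+2ay=(x+2av_j)+2a(y-v_j)$ and $x+2av_j=(x+2ay)-2a(y-v_j)$ and applying the ultrametric inequality together with $|2a|\leq 1$ shows the two bounds on $|x+2ay|$ and $|x+2av_j|$ are equivalent on the relevant slab.

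For part (c), I would first check that $T$ is contained in the square $S=\{|x|\leq R,\,|y|\leq R\}$: from $x=(x+2ay)-2ay$ one gets $|x|\leq\max(R^{1/2},|2a|R)\leq R$. I would then consider the $R^{1/2}$ translates $T+(u_j,0)$, where $u_j$ ranges over a set of coset representatives of $\{|u|\leq R^{1/2}\}$ in $\{|u|\leq R\}$. By part (b) any two such translates are equal or disjoint, and distinct representatives force disjointness (since $(u_j-u_k,0)\in T$ would force $|u_j-u_k|\leq R^{1/2}$). All these translates lie inside $S$, and their total Haar measure is $R^{1/2}\cdot R^{3/2}=R^2=|S|$, so they tile $S$. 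An arbitrary square of side $R$ is handled by translation invariance together with part (b).

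None of the three parts presents a serious obstacle once $T$ is recognized as a subgroup; the whole lemma is essentially bookkeeping with cosets and Haar measure. The only mild subtlety is the identification in part (a) of the slanted slab piece with an honest axis-aligned square, which is where the oddness of $q$ enters via $|2|=1$.
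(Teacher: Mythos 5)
Your proof is correct, and while (a) and (b) essentially track the paper's argument, your part (c) takes a genuinely different route. For (b) you and the paper do the same thing: $T$ is closed under addition (exactly the ultrametric inequality plus $|2a|\leq 1$), so translates are cosets and are equal or disjoint. For (a) the paper argues from the inside out: it shows that if $(x,y)\in T$ then the entire square of side $R^{1/2}$ containing $(x,y)$ lies in $T$, so each translate of $T$ is a union of such squares, and then counts them by volume; you instead shear to the axis-aligned rectangle $\{|u|\leq R^{1/2},\,|v|\leq R\}$, slice its long side explicitly, and verify that each slanted slice is an honest square. The key computation, $x+2ay=(x+2av_j)+2a(y-v_j)$ with $|2a|\leq 1$, is the same in both, so the two versions are interchangeable; yours has the small advantage of exhibiting the centers of the squares. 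For (c) the paper builds an explicit set of $R^{1/2}$ translation vectors out of truncated digit expansions $\sum_{-2A\leq j<-A}a_jq^j$, checks coverage of the big square by reading off digits for the sheared tube $T_0$, and then conjugates by $L_a$ to handle general $a$; you work with $T_a$ directly, take coset representatives of $\{|u|\leq R^{1/2}\}$ in $\{|u|\leq R\}$ in the first coordinate, and close the argument with a Haar-measure count. Both work; the paper's version is more hands-on, yours is shorter.

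One step in your part (c) deserves an explicit sentence: ``pairwise disjoint, contained in $S$, total measure equal to $|S|$, hence they tile $S$'' requires knowing that the uncovered set, which a priori only has measure zero, is actually empty. This is true here because each translate of $T$ is clopen (being an intersection of ultrametric ball conditions), so the uncovered part of $S$ is open, and a nonempty open subset of $\F^2$ has positive Haar measure. Alternatively you can bypass measure entirely: for $(x,y)\in S$ one has $|x+2ay|\leq\max(|x|,|2a||y|)\leq R$, so $x+2ay$ lies in some coset $u_j+\{|u|\leq R^{1/2}\}$, which says precisely that $(x,y)\in(u_j,0)+T$. Either patch is one line, and with it your argument is complete.
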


\begin{proof}
\begin{enumerate}[(a)]
    \item First, we claim that if $(x,y) \in T$, and $|(x',y') - (x,y)| \leq R^{1/2}$, then $(x',y') \in T$ as well. This is because $|x'+2ay'| = |x+2ay+(x'-x)+2a(y'-y)| \leq R^{1/2}$ if both $|x+2ay| \leq R^{1/2}$ and $|(x',y')-(x,y)| \leq R^{1/2}$ (recall $|2a| \leq 1$ when $a \in \O$). Similarly, $|y| \leq R$ and $|y'-y| \leq R^{1/2}$ implies $|y'| \leq R$. This proves the claim. It follows that if $(x,y)$ belongs to a certain translate of $T$, then the square of side length $R^{1/2}$ containing $(x,y)$ is also contained in the same translate of $T$.

    Now by the ultrametric inequality, two squares of side length $R^{1/2}$ are either equal or disjoint. Thus every translate of $T$ is a union of squares of side lengths $R^{1/2}$, and volume considerations show that each translate of $T$ contains $R^{1/2}$ many such squares.
    \item It suffices to show that if $(x,y) + T$ intersects $T$, then $(x,y) \in T$ (because then $(x,y)+T = T$). But if $(x,y) + T$ and $T$ both contains a point $(x',y')$, then both $|(x'-x)+2a(y'-y)| \leq R^{1/2}$ and $|x'+2ay'| \leq R^{1/2}$, which implies $|x+2ay| \leq R^{1/2}$. Similarly, $|y'-y| \leq R$ and $|y'| \leq R$ implies $|y| \leq R$. Thus $(x,y) \in T$, as desired.
    \item Write $R = q^{2A}$ for $A \geq 1$.
It suffices to partition $Q = \{(x, y) \in \F^2: |x| \leq R, |y| \leq R\}$ into translates of parallelograms $T_{a} := \{(x, y) \in \F^2: |x+ 2ay| \leq R^{1/2}, |y| \leq R\}$.

We first consider the $a = 0$ case.
Let $S = \{\sum_{-2A \leq j < -A}a_{j}q^{j}: a_{j} \in \{0, 1, \ldots, q - 1\}\}$.
Note that $\# S = R^{1/2}$.

We claim we can tile $Q$ by $\{(s, 0) + T_{0}: s \in S\}$. Indeed, for each $(x, y) \in Q$, we can write
$x = \sum_{-2A \leq j < -A}x_{j}q^{j} + \sum_{j \geq -A}x_{j}q^{j}$ for some $x_j \in \{0, 1, \ldots, q - 1\}$.
As $\sum_{-2A \leq j < -A}x_{j}q^{j} \in S$, $x \in (\sum_{-2A \leq j < -A}x_{j}q^{j}, 0) + T_0$.
This shows $Q \subset \bigcup_{s \in S}(s, 0) + T_0$.
The ultrametric inequality implies that $(s, 0) + T_0 \subset Q$ for each $s \in S$ and so $Q = \bigcup_{s \in S}(s, 0) + T_0$.

Finally, this union is disjoint as if $(x, y) \in ((s_1, 0) + T_0) \cap ((s_2, 0) + T_0)$, then $|s_1 - s_2| \leq R^{1/2}$
but from the definition of $S$, $|s_1 - s_2| \geq q^{A + 1} = R^{1/2} q$.
Therefore we have partitioned $Q$ into translates of $T_0$.

Next we consider the general case. Let $L_{a} = (\begin{smallmatrix} 1 & 2a \\0 & 1\end{smallmatrix})$.
The ultrametric inequality gives that $L_{a}(Q) = Q$ since $|2a| \leq 1$ and for $s \in S$, $L_{a}((s, 0) + T_0) = (s, 0) + T_a$.
Therefore we can also partition $Q$ into translates of $T_a$.
    \end{enumerate}
\end{proof}

\begin{corollary} \label{cor:wavepacket} %
Let $R \in q^{2\N}$, $I \subset \O$ be an interval with $|I| = R^{-1/2}$, and $f$ be a Schwartz function with Fourier support in $\{(\xi,\eta) \in \F^2 \colon \xi \in I, |\eta - \xi^2| \leq 1/R\}$. Then there exist constants $\{c_T\}_{T \in \T(I)}$ such that
\begin{equation} \label{eq:ptwisewp}
|f| = \sum_{T \in \T(I)} c_T 1_T.
\end{equation}
As a result, $|f|^2 = \sum_{T \in \T(I)} c_T^2 1_T$, and
\[
\int_{\F^2} |f|^2 = \sum_{T \in \T(I)} c_T^2 |T|.
\]
\end{corollary}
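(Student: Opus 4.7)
The plan is to assemble the statement directly from the two structural facts just proved: Lemma \ref{lem:uncertainty} tells us that $|f|$ is constant on each translate of the parallelogram $T$ from \eqref{eq:ta^ast}, and Lemma \ref{lem:geom} tells us that the family $\T(I)$ of translates of $T$ partitions $\F^2$. Together these immediately force $|f|$ to be a disjointly supported linear combination of indicators of the $T \in \T(I)$.

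More concretely, I would proceed as follows. First, fix any $a \in I$, form the parallelogram $T$ from \eqref{eq:ta^ast}, and note that the Fourier support of $f$ lies in the set $P$ of \eqref{eq:parallelogram_def}, so Lemma \ref{lem:uncertainty} applies. For each translate $T' \in \T(I)$, define $c_{T'}$ to be the (well-defined) common value of $|f|$ on $T'$. Second, invoke Lemma \ref{lem:geom}\eqref{lem2.4c}: every square of side $R$ is a disjoint union of translates of $T$, and since every point of $\F^2$ lies in some such square, the family $\T(I)$ covers $\F^2$; combined with part (b), which says translates of $T$ are pairwise equal or disjoint, $\T(I)$ is a partition of $\F^2$. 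Hence the pointwise equality
\[
|f| = \sum_{T \in \T(I)} c_T 1_T
\]
holds everywhere on $\F^2$. Squaring and using disjointness (so that $1_T \cdot 1_{T'} = 0$ for $T \neq T'$ and $1_T^2 = 1_T$) yields $|f|^2 = \sum_T c_T^2 1_T$, and integrating term by term gives $\int_{\F^2} |f|^2 = \sum_T c_T^2 |T|$.

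There is no real obstacle here; every ingredient has been set up. The only point that deserves a sentence of justification is that the translates of $T$ really do cover all of $\F^2$ (not merely a single large square), which follows by exhausting $\F^2$ by squares of side $q^{N} \geq R$ and applying Lemma \ref{lem:geom}\eqref{lem2.4c} on each, using part (b) to guarantee that the pieces from different exhausting squares glue consistently. Once that is noted, the corollary is essentially a one-line consequence of the uncertainty principle together with the tiling property.
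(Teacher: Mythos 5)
Your proof is correct and follows the same route as the paper: define $c_T$ as the constant value of $|f|$ on $T$ via Lemma \ref{lem:uncertainty}, then use the tiling of $\F^2$ by $\T(I)$ from Lemma \ref{lem:geom}(\ref{lem2.4c}) to conclude. The extra sentence you add about exhausting $\F^2$ by large squares is exactly the observation the paper records in the remark following Lemma \ref{lem:geom}.
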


\begin{proof}
By Lemma~\ref{lem:uncertainty}, for every $T \in \T(I)$, there exists a constant $c_T$ so that $|f| = c_T$ on $T$. By Lemma~\ref{lem:geom}(\ref{lem2.4c}), $\T(I)$ tiles $\F^2$. Thus \eqref{eq:ptwisewp} holds and the rest follows easily.
\end{proof}

\begin{lem}\label{lem:tubeintersect}
Suppose $R \in q^{2\N}$ and $a, b \in \O$ with $a \neq b$, let
$$T = \{(x, y) \in \F^2: |x + 2ay| \leq R, |y| \leq R^2\}$$
and
$$T' = \{(x, y) \in \F^2: |x + 2by| \leq R, |y| \leq R^2\}.$$
Then
\begin{align*}
|T \cap T'| \leq \frac{R^2}{|b - a|}.
\end{align*}
\end{lem}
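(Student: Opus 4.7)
The plan is to give a direct slicing argument, exploiting the fact that $q$ is odd so that $|2|=1$ in $\F$. The key observation is that the constraints defining $T$ and $T'$ are both linear in $x$ (with the same coefficient on $x$), so subtracting them via the ultrametric inequality eliminates $x$ entirely and gives a tight constraint on $y$.

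First I would fix $(x,y) \in T \cap T'$. By definition, $|x+2ay| \leq R$ and $|x+2by| \leq R$, so the ultrametric inequality yields
\[
|2(b-a)y| = |(x+2by) - (x+2ay)| \leq R.
\]
Since $q$ is odd, $2 \in \O^{\times}$, so $|2|=1$ and hence $|y| \leq R/|b-a|$. Combining with the constraint $|y| \leq R^2$ from membership in $T$, we get
\[
T \cap T' \subseteq \{(x,y) \in \F^2 : |x+2ay| \leq R, \ |y| \leq \min(R^2, R/|b-a|)\}.
\]

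Next I would compute the Haar measure by Fubini: for each fixed $y$, the set $\{x \in \F : |x + 2ay| \leq R\}$ is a translate of $\{x \in \F : |x| \leq R\}$ and hence has Haar measure $R$. Therefore
\[
|T \cap T'| \leq R \cdot \min(R^2, R/|b-a|).
\]
If $|b-a| \geq 1/R$ (equivalently, $|b-a| \in q^{-\N}$ with $|b-a| \geq q \cdot R^{-1}$ or $|b-a|=1$, etc.), then the minimum is $R/|b-a|$ and we obtain $|T \cap T'| \leq R^2/|b-a|$ directly. If instead $|b-a| < 1/R$, then $R^2/|b-a| > R^3 \geq |T|$, so the bound holds trivially.

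I do not anticipate any obstacle: the proof is a two-line calculation once one recognizes that the odd prime hypothesis makes $|2|=1$, and that subtracting the two defining inequalities in the ultrametric world gives the sharp control on $|y|$. The only point requiring a modicum of care is the case distinction based on whether $R/|b-a|$ exceeds $R^2$.
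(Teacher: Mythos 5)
Your proof is correct and follows essentially the same route as the paper: both arguments use the ultrametric inequality to eliminate $x$ from the two linear constraints, deduce $|y| \leq R/|2(b-a)| = R/|b-a|$ since $q$ is odd, and then compute the Haar measure of the resulting box (the paper just normalizes to $a=0$ first and skips the case distinction, which is in any event unnecessary since $\min(R^2, R/|b-a|) \leq R/|b-a|$ always).
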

\begin{proof}
By redefining $x$, we may assume that $a = 0$. Then
\begin{align*}
T \cap T' &= \{(x, y) \in \F^2: \max(|x|, |x + 2by|) \leq R, |y| \leq R^2\}\\
&\subset \{(x, y) \in \F^2: |x| \leq R, |y| \leq R/|2b|\}.
\end{align*}
Since $q$ is an odd prime, the claim then follows since
the Haar measure is normalized so that $|\O| = 1$.
\end{proof}

\section{Theorem \ref{thm:L2Linfty_intro} implies Theorem \ref{final}}\label{decimpliesres}
Since $K(M)$ is trivially increasing, it suffices to show Theorem \ref{final}
only in the case when $M = q^{t}$ for some $t \in \N$. By using the trivial bound for $K(M)$,
we may also assume that $t$ is sufficiently large (depending only on an absolute constant).
By considering real and imaginary parts,
we may also assume that $a_n$ is a sequence of real numbers in \eqref{dres}.

Let $R = M^2 = q^{2t}$.
Choose $F$ such that
\begin{align*}
\wh{F}(\xi, \eta) = \sum_{n = 1}^{q^{t}}a_{n}1_{(n, n^2) + B(0, q^{-10t})}(\xi, \eta)q^{20t}.
\end{align*}
Here we are using the embedding of $\Z$ into $\O$, and $(n,n^2)+B(0,q^{-10t})$ denotes the square $\{(\xi,\eta) \in \F^2 \colon |(\xi,\eta)-(n,n^2)| \leq q^{-10t}\}$.
Note that $\wh{F}$ is indeed supported inside $\Xi_{1/R}$ since %
if $|(\xi, \eta) - (n, n^2)| \leq q^{-10t}$ for some $n \in \N$, then $\xi \in \O$ and %
\begin{align*}
|\xi^{2} - \eta| &= |(\xi - n)^{2} + 2n(\xi - n) + n^{2} - \eta|\leq \max(|\xi - n|^{2}, |2n||\xi - n|, |n^{2} - \eta|).
\end{align*}
Since $q \geq 3$ is an odd prime, $|2n| \leq 1$ and so the above is $\leq q^{-10t} \leq q^{-2t}$.

Inverting the Fourier transform gives that
\begin{align*}
F(x) = \bigg(\sum_{n = 1}^{q^{t}}a_{n}\chi(x_1 n + x_2 n^{2})\bigg)1_{B(0, q^{10t})}(x).
\end{align*}
Similarly, for each $\tau$ on the right hand side of \eqref{L2Linfty_eq} (with length $R^{-1/2} = M^{-1} = q^{-t}$),
$F_{\tau}(x) =a_{n}\chi(x_1 n + x_2 n^{2})1_{B(0, q^{10t})}(x)$
where $n$ is the unique element in $\{1, \dots, q^t\} \cap \tau$; then
$\nms{F_{\tau}}_{L^{\infty}(\F^2)}^{2} = |a_n|^{2}$
and
$\nms{F_{\tau}}_{L^{2}(\F^2)}^{2} = |a_{n}|^{2}q^{20t}$.
The right hand side of \eqref{L2Linfty_eq}
is then $\lesssim (\log M)^{12 + 10\vep}q^{20t}(\sum_{n = 1}^{q^{t}}|a_n|^2)^3$.

It now remains to show that
\begin{align}\label{targetdr}
\nms{F}_{L^{6}(\Q_{q}^{2})}^{6} = q^{20t}\nms{\sum_{n = 1}^{q^{t}}a_{n}e(nx_1 + n^{2}x_2)}_{L^{6}([0, 1]^2)}^{6}.
\end{align}
This relies on that we are working with $L^6$.
Expanding the left hand side gives
\begin{equation}
\begin{aligned}\label{decreseq1}
&\sum_{n_1, \ldots, n_6 = 1}^{q^{t}}a_{n_1}\cdots a_{n_6}\times\\
&\int_{B(0, q^{10t})}\chi((n_1 + n_2 + n_3 - n_4 - n_5 - n_6)x_1 + (n_{1}^{2} + n_{2}^{2} + n_{3}^{2} - n_{4}^{2} - n_{5}^{2} - n_{6}^{2})x_2)\, dx.
\end{aligned}
\end{equation}
Applying Lemma \ref{lem:char} gives that the above is equal to
\begin{align*}
\sum_{n_1, \ldots, n_6 = 1}^{q^{t}}q^{20t}a_{n_1} \cdots a_{n_6}1_{|n_1 + n_2 + n_3 - n_4 - n_5 - n_6| \leq q^{-10t}}1_{|n_{1}^{2} + n_{2}^{2} + n_{3}^{2} - n_{4}^{2} - n_{5}^{2} - n_{6}^{2}| \leq q^{-10t}}.
\end{align*}
The statement that $(n_1, \ldots, n_6) \in \{1, \ldots, q^{t}\}^{6}$ are such that
\begin{equation}\label{eq:padicineq}
|n_1 + n_2 + n_3 - n_4 - n_5 - n_6| \leq q^{-10t}, \quad |n_{1}^{2} + n_{2}^{2} + n_{3}^{2} - n_{4}^{2} - n_{5}^{2} - n_{6}^{2}| \leq q^{-10t}
\end{equation}
is equivalent to the statement that $(n_1, \ldots, n_6)\in \{ 1, \ldots, q^{t}\}^{6}$ are such that
\begin{align*}
n_{1} + n_2 + n_{3} - n_{4} - n_{5} - n_{6} \equiv 0 \pmod{q^{10t}}, \quad n_{1}^{2} + n_{2}^{2} + n_{3}^{2} - n_{4}^{2} - n_{5}^{2} - n_{6}^{2} \equiv 0 \pmod{q^{10t}}.
\end{align*}
Since the $1 \leq n_i \leq q^{t}$, $n_{1} + n_{2} + n_{3} - n_{4} - n_{5} - n_{6}$ is an integer between
$-3q^{t}$ and $3q^{t}$, while $n_{1}^{2} + n_{2}^{2} + n_{3}^{2} - n_{4}^{2} - n_{5}^{2} - n_{6}^{2}$ is an integer 
between $-3q^{2t}$ and $3q^{2t}$. Since the only integer $\equiv 0 \pmod{q^{10t}}$ between $-3q^{2t}$
and $3q^{2t}$ is 0, \eqref{eq:padicineq} is true for a given $(n_1, \ldots, n_6) \in \{1, \ldots, q^t\}^{6}$ if
and only if
\begin{align*}
n_{1} + n_2 + n_{3} - n_{4} - n_{5} - n_{6}  = 0, \quad n_{1}^{2} + n_{2}^{2} + n_{3}^{2} - n_{4}^{2} - n_{5}^{2} - n_{6}^{2} = 0.
\end{align*}
Thus \eqref{decreseq1} is equal to
\begin{align*}
q^{20t}\sum_{n_1, \ldots, n_6 = 1}^{q^{t}}a_{n_1} \cdots a_{n_6}1_{n_1 + n_2 + n_3 - n_4 - n_5 - n_6 = 0}1_{n_{1}^{2} + n_{2}^{2} + n_{3}^{2} - n_{4}^{2} - n_{5}^{2} - n_{6}^{2} = 0}
\end{align*}
which in turn is equal to the right hand side of \eqref{targetdr}.

\section{Setting up many scales for the proof of Theorem~\ref{thm:L2Linfty_intro}}\label{log12:setup}

We now set out to prove Theorem~\ref{thm:L2Linfty_intro}.
Fix $\varepsilon \in (0,1)$. Let $A$ be an integer with
\begin{equation*}
\frac{1}{\varepsilon} \leq A \leq \frac{2}{\varepsilon}.
\end{equation*}
Henceforth all implicit constants may depend on $q$, $\varepsilon$ and $A$.

Given $R \in q^{2\N}$, choose $r \in 4\N$ so that
\begin{equation*}
q^{q^{A(r-4)}} \leq R < q^{q^{Ar}}.
\end{equation*}
Then $q^{Ar} \sim \log R$ and $(\log R)^{\varepsilon/2} \lesssim q^r \lesssim (\log R)^{\varepsilon}$,
so for $R$ sufficiently large (depending only on $q$ and $\varepsilon$) we have $r \sim \log \log R$.
Henceforth we fix a sufficiently large $R$, and define
\begin{equation*}
R_k := q^{kr} \quad \text{for $k = 0, 1, \dots, N$},
\end{equation*}
where $N \in \N$ is defined such that
\begin{equation*}
q^{Nr} \leq R < q^{(N+1)r}.
\end{equation*}
The choice $r \in 4\N$ ensures that
\begin{equation} \label{eq:Rk1/4}
R_k^{-1/2} \in q^{-2\N}
\end{equation}
for every $k$. Throughout we write $\tau_k$ for a generic interval inside $\O$ of length $R_k^{-1/2}$, for $k = 0, 1, \dots, N$. For instance, $\sum_{\tau_N}$ means sums over all intervals $\tau_N \subset \O$ with $|\tau_N| = R_N^{-1/2}$.

Let $F \colon \F^2 \to \C$ be Fourier supported in $\Xi_{1/R}$ as in the statement of Theorem~\ref{thm:L2Linfty_intro}. In order to establish \eqref{L2Linfty_eq}, it suffices to prove
\begin{equation} \label{eq:goal0}
\int_{\F^2} |F|^6 \lesssim (\log R)^{12+ 9\varepsilon} (\sum_{\tau_N} \|F_{\tau_N}\|_{L^{\infty}(\F^2)}^2)^2 (\sum_{\tau_N} \|F_{\tau_N}\|_{L^2(\F^2)}^2)
\end{equation}
and then trivially decouple from frequency scale $R_N^{-1/2}$ down to $R^{-1/2}$ (note $R_N^{-1/2} / R^{-1/2} \leq q^{r/2} \lsm (\log R)^{\varepsilon/2}$ which implies $\|F_{\tau_N}\|_{L^{\infty}}^2 \lsm (\log R)^{\varepsilon/2} \sum_{|\tau|=R^{-1/2}} \nms{F_{\tau}}_{L^{\infty}}^{2}$ and $\sum_{\tau_N}\nms{F_{\tau_N}}_{L^{2}}^{2} = \sum_{|\tau|=R^{-1/2}}\nms{F_{\tau}}_{L^{2}}^{2}$ by Plancherel).

\section{Bilinearization}\label{whitney}

The proof of Theorem~\ref{thm:L2Linfty_intro} relies on the following key bilinear estimate:

\begin{prop} \label{prop:bilinear}
Let $F$ be Fourier supported in $\Xi_{1/R}$. For $k = 0, 1, \dots, N-1$, and for intervals $\tau_k \subset \O$ with $|\tau_k| = R_k^{-1/2}$, we have
\begin{equation*}
\int_{\F^2} \max_{\substack{ \tau_{k+1} \ne \tau_{k+1}' \\ \tau_{k+1}, \tau_{k+1}' \subset \tau_k }} |F_{\tau_{k+1}} F_{\tau_{k+1}'}|^3 \lesssim (\log R)^{9 + 6 \varepsilon} (\sum_{\tau_N \subset \tau_k} \|F_{\tau_N}\|_{L^{\infty}(\F^2)}^2)^2 (\sum_{\tau_N \subset \tau_k} \|F_{\tau_N}\|_{L^2(\F^2)}^2).
\end{equation*}
\end{prop}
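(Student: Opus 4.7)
My first step is to convert the mixed $\max$/$L^6$ quantity into a level set estimate. I would define $G(x) := \max_{\tau_{k+1} \neq \tau_{k+1}' \subset \tau_k} |F_{\tau_{k+1}}(x) F_{\tau_{k+1}'}(x)|^{1/2}$. Since $F$ is Schwartz with Fourier support in $\Xi_{1/R}$, $G$ is locally constant and takes values in a dyadic range of size at most $R^{O(1)}$, and at each point the maximum is attained by one of at most $q^r \lsm (\log R)^{\vep}$ pairs $(\tau_{k+1}, \tau_{k+1}')$. Pigeonholing in those pairs costs $(\log R)^\vep$ and pigeonholing in dyadic levels of $G$ costs one log, so it suffices to prove a level set bound of the shape
\[
\lambda^{6} \, |\{x : G(x) \sim \lambda\}| \lsm (\log R)^{8 + 5\vep} \Big(\sum_{\tau_N \subset \tau_k} \nms{F_{\tau_N}}_{L^\infty}^{2}\Big)^{2} \Big(\sum_{\tau_N \subset \tau_k} \nms{F_{\tau_N}}_{L^2}^{2}\Big)
\]
uniformly in dyadic $\lambda$, which is the companion level-set statement Proposition \ref{prop:newprop34}.

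\textbf{Iterated high-low decomposition.} To prove the level set estimate, I would iterate a high-low argument in the style of Guth, Maldague and Wang across the intermediate scales $R_{k+1}, R_{k+2}, \dots, R_N$. At each intermediate scale $R_j$, Corollary \ref{cor:wavepacket} expresses every $|F_{\tau_j}|^2$ as a sum of constants on tubes $T \in \T(\tau_j)$, so the whole bilinear square function is encoded by tube coefficients. I would split $g_j := \sum_{\tau_j \subset \tau_k} |F_{\tau_j}|^2$ into a high part $g_j^h$ (tubes whose coefficient is close to a dyadic maximum) and a low part $g_j^\ell$, at a cost of one log per scale. On the low set $L_j$ where $g_j^\ell$ dominates, the bilinear transversality $|a - a'| \gtrsim R_k^{-1/2}$ (which is where the hypothesis $\tau_{k+1} \neq \tau_{k+1}'$ enters) combined with the tube intersection bound of Lemma \ref{lem:tubeintersect} gives an $L^2$ estimate strong enough to close the step. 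On the complementary set $\Omega_j$ where $g_j^h$ dominates, I would pass to the finer scale $R_{j+1}$ and iterate. The quadratic dependence on $\lambda$ in the high-low split is what produces the 4 promised $\lambda^2$ logs, the decomposition into $L_j$ and $\Omega_j$ contributes another log, and passing from $g_j^2$ to $(g_j^h)^2$ contributes 2 more. Iteration terminates at scale $R_N$, where Corollary \ref{cor:wavepacket} directly yields the $L^2$ and $L^\infty$ sums on the right hand side.

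\textbf{Main obstacle.} The central difficulty is the careful bookkeeping of logarithms. The iteration runs over $N \sim (\log R)/r$ scales with $r \sim \vep \log \log R$, so naively losing one $\log R$ per scale would be catastrophic. The saving is that the scale ratio $R_{j+1}/R_j = q^r \lsm (\log R)^\vep$ is small, so each dyadic pigeonholing step loses at most a $\log \log R$ factor; and crucially the $\lambda^2$ gain from the high-low split only activates at a bounded number of critical scales per dyadic $\lambda$, giving 4 rather than $N$ contributions. Implementing the high-low step in the $q$-adic wave-packet framework so that Lemma \ref{lem:tubeintersect} feeds the transverse $L^2$ gain, and so that the losses at non-critical scales telescope rather than accumulate, is where the bulk of the work lies and is what must be done efficiently to land at the target exponent $9 + 6\vep$ rather than the much larger exponent implicit in \cite{GMW}.
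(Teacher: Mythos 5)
Your high-level architecture (reduce to a level set estimate, then run a high/low iteration over the scales $R_j$) matches the paper's, but there is a genuine gap in your first step: the reduction to the level set estimate. You define $G(x) = \max |F_{\tau_{k+1}}F_{\tau_{k+1}'}|^{1/2}$ and claim that dyadic pigeonholing on $G$ reduces matters to Proposition \ref{prop:newprop34}. But the set $U_\alpha(f)$ in that proposition is \emph{not} just $\{G \sim \alpha\}$: it carries the additional constraint $(\sum_{\tau_1}|f_{\tau_1}(x)|^6)^{1/6} \lsm (\log R)q^{r/2}\alpha$, and this constraint is indispensable. In the "approximation by pruned wave packets" step one gets
\begin{equation*}
|f_{\tau_1}(x)f_{\tau_1'}(x)| = |f_{k+1,\tau_1}(x)f_{k+1,\tau_1'}(x)| + O\Big(\tfrac{\alpha}{(\log R)q^{r/2}\log\log R}\big(|f_{\tau_1}(x)|+|f_{\tau_1'}(x)|\big) + \cdots\Big),
\end{equation*}
and the error is only $O(\alpha^2/\log\log R)$ because the $\ell^6$ condition bounds $|f_{\tau_1}(x)|$ by $(\log R)q^{r/2}\alpha$. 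On the bare set $\{G\sim\alpha\}$ a single $|f_{\tau_1}(x)|$ can be enormous while the bilinear maximum stays small, so the pruning threshold $\ld$ in \eqref{eq:lambda} cannot be calibrated and the iteration does not close. The paper supplies the missing $\ell^6$ control via the broad/narrow decomposition of Section 6 (the sets $\mc{B}_{\tau_k}$, the Narrow Lemma, and the $(1-(\log R)^{-1})^{-6k}$ bookkeeping across scales), which is an essential structural step your proposal omits entirely. You also omit the parabolic rescaling \eqref{rescale1}--\eqref{rescale2} that moves from general $\tau_k$ to the normalized setting $k=0$ in which the level set estimate is stated; that is routine but still has to be said.

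Two smaller inaccuracies: the separation between distinct $\tau_{k+1},\tau_{k+1}'\subset\tau_k$ is only $\geq qR_{k+1}^{-1/2}$, not $\gtrsim R_k^{-1/2}$ as you assert (after rescaling this is the $\kappa = R_1^{-1/2}$ that feeds Lemma \ref{lem_bilinear}); and the high/low split is a Fourier-side truncation $g_j^l = g_j * R_{j+1}^{-1}1_{B(0,R_{j+1}^{1/2})}$ rather than a selection of "tubes whose coefficient is close to a dyadic maximum," with the four $\lambda^2$ logs coming from the choice $\ld = (\log R)^2 q^{r/2}\|g_J\|_{L^\infty}/\alpha$ in \eqref{eq:lambda2appear}, not from a bounded number of "critical scales."
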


We also need the following Whitney decomposition for $\O^2$, which expresses $\O^2$ into a disjoint union of squares of different scales:
\begin{equation*}
\O^2 = \mathcal{W}_0 \sqcup \mathcal{W}_1 \sqcup \dots \sqcup \mathcal{W}_{N-1} \sqcup \mathcal{W}^N
\end{equation*}
where
\begin{equation*}
\mathcal{W}_k := \bigsqcup_{\tau_k \subset \O} \bigsqcup_{\substack{\tau_{k+1} \ne \tau_{k+1}' \\ \tau_{k+1}, \tau_{k+1}' \subset \tau_k}} \tau_{k+1} \times \tau_{k+1}' \quad \text{for $k = 0, 1, \dots, N-1$}
\end{equation*}
and
\begin{equation*}
\mathcal{W}^N := \bigsqcup_{\tau_N \subset \O}  \tau_N \times \tau_N.
\end{equation*}

The proof of \eqref{eq:goal0}, and hence Theorem~\ref{thm:L2Linfty_intro} can then be given as follows.
First,
\begin{align*}
\int_{\F^2} |F|^6 = \int_{\F^2} | F^2 |^3
= \int_{\F^2} \Big| \sum_{\tau_N \subset \O} F_{\tau_N}^2 + \sum_{k=0}^{N-1} \sum_{\tau_{k+1} \times \tau_{k+1}' \subset \mathcal{W}_k} F_{\tau_{k+1}} F_{\tau_{k+1}'} \Big|^3
\end{align*}
which by the Minkowski inequality is
\begin{align} \label{eq:5.6}
\leq \left[ \sum_{\tau_N} \left( \int_{\F^2} \Big| F_{\tau_N}^2 \Big|^3 \right)^{1/3} + \sum_{k=0}^{N-1} \sum_{\tau_{k+1} \times \tau_{k+1}' \subset \mathcal{W}_k} \left( \int_{\F^2} \Big| F_{\tau_{k+1}} F_{\tau_{k+1}'} \Big|^3 \right)^{1/3} \right]^3.
\end{align}
H\"{o}lder's inequality gives
\begin{align*}
\sum_{\tau_N} \left( \int_{\F^2} \Big| F_{\tau_N}^2 \Big|^3 \right)^{1/3}
&= \sum_{\tau_N} \| F_{\tau_N} \|_{L^6(\F^2)}^2
\leq \sum_{\tau_N} \| F_{\tau_N} \|_{L^{\infty}(\F^2)}^{2 \cdot \frac{2}{3}} \| F_{\tau_N} \|_{L^2(\F^2)}^{2 \cdot \frac{1}{3}} \\
&\leq ( \sum_{\tau_N} \| F_{\tau_N} \|_{L^{\infty}(\F^2)}^2 )^{\frac{2}{3}} (\sum_{\tau_N} \| F_{\tau_N} \|_{L^2(\F^2)}^2 )^{\frac{1}{3}}.
\end{align*}
In addition, for each fixed $\tau_k$, the number of $(\tau_{k+1},\tau_{k+1}')$ with $\tau_{k+1},\tau_{k+1}' \subset \tau_k$ is $\leq (q^{r/2})^2 \lesssim (\log R)^{\varepsilon}$. Together with Proposition~\ref{prop:bilinear}, this shows that for each $k = 0, 1, \dots, N-1$,
\begin{align*}
&\sum_{\tau_{k+1} \times \tau_{k+1}' \subset \mathcal{W}_k} \left( \int_{\F^2} \Big| F_{\tau_{k+1}} F_{\tau_{k+1}'} \Big|^3 \right)^{1/3}
= \, \sum_{\tau_k} \sum_{ \substack{ \tau_{k+1} \ne \tau_{k+1}' \\ \tau_{k+1}, \tau_{k+1}' \subset \tau_k }} \left( \int_{\F^2} \Big| F_{\tau_{k+1}} F_{\tau_{k+1}'} \Big|^3 \right)^{1/3} \\
\lesssim &\, (\log R)^{3+2\varepsilon} (\log R)^{\varepsilon} \sum_{\tau_k} ( \sum_{\tau_N \subset \tau_k} \| F_{\tau_N} \|_{L^{\infty}(\F^2)}^2 )^{\frac{2}{3}} (\sum_{\tau_N \subset \tau_k} \| F_{\tau_N} \|_{L^2(\F^2)}^2 )^{\frac{1}{3}} \\
\leq &\, (\log R)^{3 + 3\varepsilon}  ( \sum_{\tau_N} \| F_{\tau_N} \|_{L^{\infty}(\F^2)}^2 )^{\frac{2}{3}} (\sum_{\tau_N} \| F_{\tau_N} \|_{L^2(\F^2)}^2 )^{\frac{1}{3}}
\end{align*}
Thus \eqref{eq:5.6} is bounded by
\begin{equation*}
N^3 (\log R)^{9 + 9\varepsilon}  ( \sum_{\tau_N} \| F_{\tau_N} \|_{L^{\infty}(\F^2)}^2 )^2 (\sum_{\tau_N} \| F_{\tau_N} \|_{L^2(\F^2)}^2 )
\end{equation*}
which proves \eqref{eq:goal0} because $N \lesssim \log R$. %

Proposition~\ref{prop:bilinear} can be proved by parabolic rescaling and the proposition below. That is, we use the next proposition with $J = N-k$ and
\begin{equation}\label{rescale1}
f(x) := \chi(-R_k^{1/2} a x_1 +R_k a^2 x_2) F_{\tau_k}(R_k^{1/2} x_1 - 2 a R_k x_2, R_k x_2)
\end{equation}
where $a$ is an arbitrary point in $\tau_k$. Note that
\begin{equation}\label{rescale2}
\widehat{f}(\xi,\eta) =  R_k^{-3/2} \widehat{F}_{\tau_k}(a + R_k^{-1/2} \xi, a^2 + 2 a R_k^{-1/2} \xi + R_k^{-1} \eta)
\end{equation}
is supported on $\Xi_{R_k / R} \subset \Xi_{1/R_{N-k}}$.
\begin{prop} \label{prop:bilinear_rescaled}
Let $J = 1, \dots, N$ and let $f$ be Fourier supported in $\Xi_{1/R_J}$. Then
\begin{equation*}
\int_{\F^2} \max_{\tau_1 \ne \tau_1'} |f_{\tau_1} f_{\tau_1'}|^3 \lesssim (\log R)^{9 + 6 \varepsilon} (\sum_{\tau_J} \|f_{\tau_J}\|_{L^{\infty}(\F^2)}^2)^2 (\sum_{\tau_J} \|f_{\tau_J}\|_{L^2(\F^2)}^2).
\end{equation*}
\end{prop}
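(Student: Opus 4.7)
The plan is to deduce Proposition~\ref{prop:bilinear_rescaled} from the level set estimate (Proposition~3.4 of the paper, to be proved via the high/low decomposition in a later section) by a dyadic decomposition of the amplitudes of $f_{\tau_1}$ and $f_{\tau_1'}$, combined with a broad/narrow induction over the intermediate scales $R_1 < R_2 < \dots < R_J$. The target exponent $9 + 6\varepsilon$ should break up as $7 + 1 + 1$ logs (seven from inside the level set estimate, one from pigeonholing amplitudes, one from pigeonholing a dominant scale), with the $6\varepsilon$ absorbing the $(\log R)^{\varepsilon}$ losses at each scale transition (recall $q^r \sim (\log R)^\varepsilon$).

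First I would pass the max over pairs to the outside: there are only $q(q-1) = O(1)$ ordered pairs of distinct intervals $\tau_1 \ne \tau_1' \subset \O$ of length $R_1^{-1/2} = q^{-r/2}$, so the problem reduces to a fixed pair $(\tau_1, \tau_1')$. By the $q$-adic uncertainty principle (Lemma~\ref{lem:uncertainty} and Corollary~\ref{cor:wavepacket}), $|f_{\tau_1}|$ and $|f_{\tau_1'}|$ are constant on translates of their respective dual parallelograms at scale $R_1$. I would dyadically decompose these constant values and, using the trivial bound $\|f_{\tau_1}\|_{L^{\infty}} \le (\sum_{\tau_J \subset \tau_1} \|f_{\tau_J}\|_{L^{\infty}}^2)^{1/2}$, discard dyadic amplitudes outside a range of length $O(\log R)$. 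Pigeonholing a dominant pair $(\alpha_1,\alpha_2)$ costs one log and reduces matters to a superlevel-set bound of the form
\[
\alpha_1^3 \alpha_2^3 |U_{\alpha_1, \alpha_2}| \lesssim (\log R)^{8 + 6\varepsilon} \Big(\sum_{\tau_J} \|f_{\tau_J}\|_{L^\infty}^2\Big)^{\!2} \Big(\sum_{\tau_J} \|f_{\tau_J}\|_{L^2}^2\Big),
\]
where $U_{\alpha_1, \alpha_2} := \{|f_{\tau_1}| \sim \alpha_1,\, |f_{\tau_1'}| \sim \alpha_2\}$.

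I would then introduce a broad/narrow dichotomy at each intermediate scale $R_k$: points where the mass of $f$ concentrates on a single subinterval $\tau_k$ are narrow and handled by passing to a finer scale, while points where the mass is spread among at least two well-separated subintervals are broad and are precisely those to which the level set estimate applies. Pigeonholing a dominant broad scale $k \in \{1, \dots, N\}$ contributes one further log (the ``1 log from number of scales'' listed in the introduction), and at that scale I would apply the level set estimate, which supplies the remaining seven logs: one from decomposing $\F^2$ into sets $\Omega_k$ and $L$, two from replacing $g_k^2$ by $|g_k^h|^2$ on $\Omega_k$, and four from the $\alpha^2 \lambda^2$ factor produced by the Cordoba-style $L^4$ bound on the high-frequency piece.

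The main obstacle will be the level set estimate itself together with the bookkeeping needed to ensure that the pigeonholes in our amplitude decomposition and scale selection do not compound destructively with its internal logs. The paper's choice $q^r \sim (\log R)^{\varepsilon}$ is what makes the accounting work: each of the $N \sim \log\log R$ scale transitions costs only $(\log R)^{O(\varepsilon)}$, so the cumulative loss sits comfortably inside the $6\varepsilon$ slack. The much coarser scale ratio $(\log R)^{12}$ used in GMW would swamp the exponent, while a finer ratio would blow up the number of scales; the tightness of the geometric progression $R_k = q^{kr}$ is therefore critical, as is the rigorous availability of the uncertainty principle in the $q$-adic setting, which lets us treat $|f_{\tau}|$ as an honest step function in the dyadic amplitude decomposition rather than as a Schwartz-tail heuristic.
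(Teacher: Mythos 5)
Your plan follows the paper's route: a broad/narrow induction over the intermediate scales $R_1,\dots,R_J$, a pigeonhole in the amplitude and in the dominant scale, and an appeal to the level set estimate (Proposition \ref{prop:newprop34}) for the remaining seven logs; the $7+1+1$ accounting is exactly the paper's. Two bookkeeping points would, as written, break the count you are trying to protect. First, the number of intervals $\tau_1 \subset \O$ of length $R_1^{-1/2}=q^{-r/2}$ is $q^{r/2}$, not $q$, so there are $\sim q^{r}\sim(\log R)^{\varepsilon}$ ordered pairs rather than $O(1)$; this is harmless (it is absorbed by the $6\varepsilon$), but your justification for pulling the max outside is wrong as stated. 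Second, and more seriously, pigeonholing a dominant \emph{pair} $(\alpha_1,\alpha_2)$ over a grid of $O(\log R)\times O(\log R)$ dyadic values costs two logs, not one, which would give $(\log R)^{10+\cdots}$. The paper avoids this by never separating the two factors: it keeps $\max_{\tau_1\neq\tau_1'}|f_{\tau_1}f_{\tau_1'}|^{1/2}$ as a single scalar quantity inside the integral and pigeonholes only its value $\alpha$ (one log, in the proof of Proposition \ref{prop:bilinear_broad}). In your framework the fix is to pigeonhole only the product $\alpha_1\alpha_2$, which is all the level set estimate sees.

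One further point to be careful about: Proposition \ref{prop:newprop34} applies only on the set where, in addition to $\max_{\tau_1\neq\tau_1'}|f_{\tau_1}f_{\tau_1'}|^{1/2}\sim\alpha$, one also has $(\sum_{\tau_1}|f_{\tau_1}|^6)^{1/6}\lesssim(\log R)q^{r/2}\alpha$. Your set $U_{\alpha_1,\alpha_2}$ does not carry this condition, so the order of operations matters: the broad/narrow dichotomy, which is what supplies the $\ell^6$ control via condition \eqref{eq:alternative2} in the definition of $\mc{B}_{\tau_k}$, has to be run before you restrict to a level set (as the paper does), or else you must build the $\ell^6$ condition into your superlevel sets by hand. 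You also need to parabolically rescale the dominant $\tau_k$ back to $\O$ before the level set estimate applies, and you need a lower truncation of the amplitude (the set $\mc{B}_{\mathrm{small}}$ in the paper) so that the admissible range of $\alpha$ really has only $O(\log R)$ dyadic values. None of these is fatal, but each is load-bearing for the exponent $9+6\varepsilon$.
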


It remains to prove Proposition~\ref{prop:bilinear_rescaled}.

\section{Broad/Narrow decomposition: Proof of Proposition~\ref{prop:bilinear_rescaled}}

The proof of Proposition~\ref{prop:bilinear_rescaled} is via a broad/narrow decomposition. Let $J = 1, \dots, N$ and $f$ be Fourier supported in $\Xi_{1/R_J}$. For $k = 0, 1, \dots, J-1$, and for $\tau_k \subset \O$ with $|\tau_k| = R_k^{-1/2}$, define
\begin{align}
\mc{B}_{\tau_k} = & \{x \in \F^2 \colon |f_{\tau_k}(x)| \leq (\log R) q^{r/2} \max_{\substack{\tau_{k+1} \ne \tau_{k+1}' \\ \tau_{k+1}, \tau_{k+1}' \subset \tau_k}} |f_{\tau_{k+1}}(x) f_{\tau_{k+1}'}(x)|^{1/2} \label{eq:alternative1} \\
& \text{and} \quad (\sum_{\tau_{k+1} \subset \tau_k} |f_{\tau_{k+1}}(x)|^6)^{1/6} \leq (\log R) q^{r/2} \max_{\substack{\tau_{k+1} \ne \tau_{k+1}' \\ \tau_{k+1}, \tau_{k+1}' \subset \tau_k}} |f_{\tau_{k+1}}(x) f_{\tau_{k+1}'}(x)|^{1/2} \}. \label{eq:alternative2}
\end{align}
For $x \notin \mc{B}_{\tau_0}$, we have
\begin{equation} \label{eq:initial_narrow}
 \max_{\tau_1 \ne \tau_1'} |f_{\tau_1}(x) f_{\tau_1'}(x)|^3 \leq \frac{q^{-r/2}}{(\log R)^6} \sum_{\tau_1} |f_{\tau_1}(x)|^6.
\end{equation}
This is because if $x \notin \mc{B}_{\tau_0}$, then either \eqref{eq:alternative1} is violated, in which case
\begin{align*}
\max_{\tau_1 \ne \tau_1'} |f_{\tau_1}(x) f_{\tau_1'}(x)|^3
\leq \frac{q^{-3r}}{(\log R)^6} |f(x)|^6
=  \frac{q^{-3r}}{(\log R)^6} |\sum_{\tau_1} f_{\tau_1}(x)|^6
\leq  \frac{q^{-3r}}{(\log R)^6} q^{5r/2} \sum_{\tau_1} |f_{\tau_1}(x)|^6,
\end{align*}
or \eqref{eq:alternative2} is violated, in which case
\begin{equation*}
 \max_{\tau_1 \ne \tau_1'} |f_{\tau_1}(x) f_{\tau_1'}(x)|^3
 \leq  \frac{q^{-3r}}{(\log R)^6} \sum_{\tau_1} |f_{\tau_1}(x)|^6.
\end{equation*}
Either way \eqref{eq:initial_narrow} holds. Upon splitting the integral in Proposition~\ref{prop:bilinear_rescaled} according to whether $x \in \mc{B}_{\tau_0}$ or not, \eqref{eq:initial_narrow} allows us to obtain
\begin{equation} \label{eq:bn_initial}
\int_{\F^2} \max_{\tau_1 \ne \tau_1'} |f_{\tau_1} f_{\tau_1'}|^3
\leq \int_{\mc{B}_{\tau_0}} \max_{\tau_1 \ne \tau_1'} |f_{\tau_1} f_{\tau_1'}|^3 + \frac{q^{-r/2}}{(\log R)^6} \sum_{\tau_1} \int_{\F^2} |f_{\tau_1}|^6.
\end{equation}
Now observe that if $k = 1, \dots, J-1$ and $|\tau_k| = R_k^{-1/2}$, then
\begin{enumerate}[(a)]
\item for $x \in \mc{B}_{\tau_k}$, we have
\begin{equation} \label{eq:broad_pointwise}
|f_{\tau_k}(x)|^6 \leq (\log R)^6 q^{3r} \max_{\substack{\tau_{k+1}, \tau_{k+1}' \subset \tau_k \\ \tau_{k+1} \ne \tau_{k+1}'}} |f_{\tau_{k+1}}(x) f_{\tau_{k+1}'}(x)|^3;
\end{equation}
\item for $x \notin \mc{B}_{\tau_k}$, we have
\begin{equation} \label{eq:narrow_pointwise}
|f_{\tau_k}(x)|^6 \leq (1-(\log R)^{-1})^{-6} \sum_{\tau_{k+1} \subset \tau_k} |f_{\tau_{k+1}}(x)|^6.
\end{equation}
\end{enumerate}
The estimate \eqref{eq:broad_pointwise} holds because of \eqref{eq:alternative1}. The proof of \eqref{eq:narrow_pointwise} proceeds via the Narrow Lemma:

\begin{lem}[Narrow Lemma] \label{lem:narrow}
Fix $\tau_k \subset \O$ with $|\tau_k| = R_{k}^{-1/2}$.
Suppose $x$ satisfies
\begin{equation*}%
|f_{\tau_k}(x)| > (\log R) q^{r/2}  \max_{\substack{\tau_{k + 1}, \tau_{k + 1}' \subset \tau_k\\\tau_{k+1} \ne \tau_{k+1}'}} |f_{\tau_{k+1}}(x) f_{\tau_{k+1}'}(x)|^{1/2}.
\end{equation*}
Then there exists a $\tau_{k+1} \subset \tau_k$ such that
\begin{equation*}%
|f_{\tau_k}(x)| \leq (1 - (\log R)^{-1})^{-1}|f_{\tau_{k+1}}(x)|.
\end{equation*}
\end{lem}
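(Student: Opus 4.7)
The plan is to take $\tau_{k+1}^* \subset \tau_k$ to be the subinterval achieving $m := \max_{\tau_{k+1} \subset \tau_k} |f_{\tau_{k+1}}(x)|$ and to show that this $\tau_{k+1}^*$ works. The intuition is that the hypothesis precludes two of the $f_{\tau_{k+1}}(x)$ being comparable and large, so essentially one term must dominate the sum $f_{\tau_k} = \sum_{\tau_{k+1} \subset \tau_k} f_{\tau_{k+1}}$.

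First I would record the two basic facts. Since $|\tau_k|/|\tau_{k+1}| = R_{k+1}^{1/2}/R_k^{1/2} = q^{r/2}$, the interval $\tau_k$ splits disjointly into exactly $q^{r/2}$ subintervals $\tau_{k+1}$, so $f_{\tau_k}(x) = \sum_{\tau_{k+1} \subset \tau_k} f_{\tau_{k+1}}(x)$. Writing $M := \max_{\tau_{k+1} \subset \tau_k,\ \tau_{k+1} \ne \tau_{k+1}^*} |f_{\tau_{k+1}}(x)|$ for the second-largest value, the triangle inequality gives
\begin{equation*}
|f_{\tau_k}(x)| \leq m + (q^{r/2} - 1) M \leq m + q^{r/2} M.
\end{equation*}
On the other hand, the maximum $\max_{\tau_{k+1} \ne \tau_{k+1}'} |f_{\tau_{k+1}}(x) f_{\tau_{k+1}'}(x)|^{1/2}$ is realized by pairing the largest with the second-largest, so it equals $\sqrt{mM}$, and the hypothesis becomes $|f_{\tau_k}(x)| > (\log R) q^{r/2} \sqrt{mM}$.

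The conclusion then follows by a short contradiction argument. Suppose $m < (1 - (\log R)^{-1}) |f_{\tau_k}(x)|$. Combined with the triangle bound, this yields $(\log R)^{-1} |f_{\tau_k}(x)| < q^{r/2} M$, i.e.\ $|f_{\tau_k}(x)| < (\log R) q^{r/2} M$. But then the hypothesis $(\log R) q^{r/2} \sqrt{mM} < |f_{\tau_k}(x)| < (\log R) q^{r/2} M$ forces $\sqrt{mM} < M$, hence $m < M$, contradicting $m \geq M$ (which is automatic since $m$ is the overall maximum). Therefore $m \geq (1 - (\log R)^{-1}) |f_{\tau_k}(x)|$, which rearranges to the desired inequality with $\tau_{k+1} = \tau_{k+1}^*$.

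No real obstacle is expected here; the only point that needs a tiny bit of care is the counting $q^{r/2}$ of subintervals (which uses $r \in 4\N$ so that $R_k^{-1/2}$ is a legitimate $q$-adic length) and the identification of the maximum of $|f_{\tau_{k+1}} f_{\tau_{k+1}'}|^{1/2}$ over pairs of distinct subintervals with $\sqrt{mM}$.
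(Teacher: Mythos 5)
Your proof is correct and follows essentially the same route as the paper: both select the subinterval $\tau_{k+1}^*$ maximizing $|f_{\tau_{k+1}}(x)|$ and use the bilinear hypothesis to show the remaining $q^{r/2}-1$ terms contribute at most $(\log R)^{-1}|f_{\tau_k}(x)|$ to the sum $f_{\tau_k}=\sum_{\tau_{k+1}\subset\tau_k}f_{\tau_{k+1}}$. The paper phrases this directly via the reverse triangle inequality (bounding each non-maximal term by $|f_{\tau_{k+1}}f_{\tau_{k+1}^*}|^{1/2}<(\log R)^{-1}q^{-r/2}|f_{\tau_k}(x)|$) rather than by contradiction with the second-largest value $M$, but the content is identical.
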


Indeed, for $x \notin \mc{B}_{\tau_k}$, either \eqref{eq:alternative1} fails, in which case the Narrow Lemma applies, or \eqref{eq:alternative1} holds but \eqref{eq:alternative2} fails, in which case
\begin{equation*}
|f_{\tau_k}(x)| \leq (\log R) q^{r/2} \max_{\substack{\tau_{k+1}, \tau_{k+1}' \subset \tau_k \\ \tau_{k+1} \ne \tau_{k+1}'}} |f_{\tau_{k+1}}(x) f_{\tau_{k+1}'}(x)|^{1/2} \leq (\sum_{\tau_{k+1} \subset \tau_k} |f_{\tau_{k+1}}(x)|^6)^{1/6}.
\end{equation*}
Either way \eqref{eq:narrow_pointwise} holds. From \eqref{eq:broad_pointwise} and \eqref{eq:narrow_pointwise}, we see that for $k = 1, \dots, J-1$ and $|\tau_k| = R_k^{-1/2}$,
\begin{align*}
\frac{q^{-r/2}}{(\log R)^6} (1-(\log R)^{-1})^{-6(k-1)} \int_{\F^2} |f_{\tau_k}|^6
\leq \, & q^{5r/2} (1-(\log R)^{-1})^{-6(k-1)} \int_{\mc{B}_{\tau_k}} \max_{\substack{\tau_{k+1} \ne \tau_{k+1}' \\ \tau_{k+1}, \tau_{k+1}' \subset \tau_k}} |f_{\tau_{k+1}} f_{\tau_{k+1}'}|^3 \\
& + \frac{q^{-r/2}}{(\log R)^6} (1-(\log R)^{-1})^{-6k} \sum_{\tau_{k+1} \subset \tau_k} \int_{\F^2}  |f_{\tau_{k+1}}|^6.
\end{align*}
Summing over $\tau_k$, we get
\begin{align*}
&\frac{q^{-r/2}}{(\log R)^6} (1-(\log R)^{-1})^{-6(k-1)} \sum_{\tau_k} \int_{\F^2} |f_{\tau_k}|^6 \\
\leq \, & q^{5r/2} (1-(\log R)^{-1})^{-6(k-1)} \sum_{\tau_k} \int_{\mc{B}_{\tau_k}} \max_{\substack{\tau_{k+1} \ne \tau_{k+1}' \\ \tau_{k+1}, \tau_{k+1}' \subset \tau_k}} |f_{\tau_{k+1}} f_{\tau_{k+1}'}|^3 \\
& + \frac{q^{-r/2}}{(\log R)^6} (1-(\log R)^{-1})^{-6k} \sum_{\tau_{k+1}} \int_{\F^2}  |f_{\tau_{k+1}}|^6
\end{align*}
for $k = 1, \dots, J-1$. We now apply these successively to the right hand side of \eqref{eq:bn_initial}, starting with $k = 1$ and going all the way up to $k = J-1$. Then
\begin{align*}
\int_{\F^2} \max_{\tau_1 \ne \tau_1'} |f_{\tau_1} f_{\tau_1'}|^3 & \leq \int_{\mc{B}_{\tau_0}} \max_{\tau_1 \ne \tau_1'} |f_{\tau_1} f_{\tau_1'}|^3 \\
& + \sum_{k=1}^{J-1} q^{5r/2} (1-(\log R)^{-1})^{-6(k-1)} \sum_{\tau_k} \int_{\mc{B}_{\tau_k}} \max_{\substack{\tau_{k+1} \ne \tau_{k+1}' \\ \tau_{k+1}, \tau_{k+1}' \subset \tau_k}} |f_{\tau_{k+1}} f_{\tau_{k+1}'}|^3  \\
&+ \frac{q^{-r/2}}{(\log R)^6} (1-(\log R)^{-1})^{-6(J-1)} \sum_{\tau_J} \int_{\F^2} |f_{\tau_J}|^6.
\end{align*}
Since $J \leq N \lesssim \log R$, this gives
\begin{align}
\int_{\F^2} \max_{\tau_1 \ne \tau_1'} |f_{\tau_1} f_{\tau_1'}|^3
& \lesssim q^{5r/2} (\log R) \max_{k=0,\dots,J-1} \sum_{\tau_k} \int_{\mc{B}_{\tau_k}} \max_{\substack{\tau_{k+1} \ne \tau_{k+1}' \\ \tau_{k+1}, \tau_{k+1}' \subset \tau_k}} |f_{\tau_{k+1}} f_{\tau_{k+1}'}|^3 \label{eq:broad_final} \\
&+ \frac{q^{-r/2}}{(\log R)^6} \sum_{\tau_J} \int_{\F^2} |f_{\tau_J}|^6. \label{eq:narrow_final}
\end{align}
Observe that
\begin{equation} \label{eq:narrow_last}
\eqref{eq:narrow_final} \lesssim \frac{q^{-r/2}}{(\log R)^6} (\sum_{\tau_J} \|f_{\tau_J}\|_{L^{\infty}(\F^2)}^2)^2 (\sum_{\tau_J} \|f_{\tau_J}\|_{L^2(\F^2)}^2)
\end{equation}
which is much better than what we needed in the conclusion of Proposition~\ref{prop:bilinear_rescaled}. Equation \eqref{eq:broad_final} is controlled by the following proposition:

\begin{prop} \label{prop:bilinear_broad}
Let $J = 1, \dots, N$ and let $f$ be Fourier supported in $\Xi_{1/R_J}$. Let $k = 0, 1, \dots, J-1$ and $\tau_k \subset \O$ with $|\tau_k| = R_k^{-1/2}$. Then
\begin{equation} \label{eq:bilinear_broad}
\int_{\mc{B}_{\tau_k}} \max_{\substack{\tau_{k+1} \ne \tau_{k+1}' \\ \tau_{k+1}, \tau_{k+1}' \subset \tau_k}} |f_{\tau_{k+1}} f_{\tau_{k+1}'}|^3
\lesssim (\log R)^{8 + \frac{7 \varepsilon}{2}} (\sum_{\tau_J \subset \tau_k} \|f_{\tau_J}\|_{L^{\infty}(\F^2)}^2)^2 (\sum_{\tau_J \subset \tau_k} \|f_{\tau_J}\|_{L^2(\F^2)}^2).
\end{equation}
\end{prop}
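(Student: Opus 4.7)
The plan is to reduce Proposition~\ref{prop:bilinear_broad} to the level set estimate (Proposition~\ref{prop:newprop34}) advertised in the introduction, whose seven logs together with roughly one log of pigeonholing (plus small $\varepsilon$-losses) give the desired $(\log R)^{8 + 7\varepsilon/2}$.

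As a preliminary, I would use parabolic rescaling to reduce to $k = 0$ and $\tau_k = \O$. Following \eqref{rescale1}, set
\[
g(x) := \chi(-R_k^{1/2} a x_1 + R_k a^2 x_2)\, f_{\tau_k}(R_k^{1/2} x_1 - 2 a R_k x_2,\, R_k x_2)
\]
for any $a \in \tau_k$. By \eqref{rescale2}, $g$ is Fourier supported in $\Xi_{1/R_{J-k}}$; the underlying linear change of variables has Jacobian $R_k^{3/2}$, so under the correspondence $\tau_{k+j} \leftrightarrow \tilde\tau_j$ one has $\|g_{\tilde\tau_j}\|_{L^p}^p = R_k^{-3/2}\|f_{\tau_{k+j}}\|_{L^p}^p$ for $p \in [1,\infty)$ and $\|g_{\tilde\tau_j}\|_{L^\infty} = \|f_{\tau_{k+j}}\|_{L^\infty}$. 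The broad conditions \eqref{eq:alternative1}--\eqref{eq:alternative2} involve only ratios of the $|f_{\tau}|$'s, so $\mc{B}_{\tau_k}$ transports to $\mc{B}_\O$ for $g$. The $R_k^{3/2}$ factors on the two sides of \eqref{eq:bilinear_broad} cancel, leaving the proposition to prove with $k = 0$, $\tau_k = \O$, and $J$ replaced by $J' := J - k$.

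Next, I would pigeonhole to single out a level set. Since there are at most $q^{2r} \lesssim (\log R)^{2\varepsilon}$ ordered pairs $(\tau_1, \tau_1')$ with $\tau_1 \ne \tau_1' \subset \O$, fixing one such pair costs $(\log R)^{2\varepsilon}$. Writing $\Lambda(x) := |g_{\tau_1}(x) g_{\tau_1'}(x)|^{1/2}$ and dyadically decomposing $\Lambda$, one can discard the regions where $\lambda := \Lambda(x)$ exceeds $\|g\|_{L^\infty}$ or falls below a trivial threshold like $R_{J'}^{-100} \max_{\tau_{J'}}\|g_{\tau_{J'}}\|_{L^2}$ (a negligible tail by Cauchy--Schwarz together with the wave-packet $L^\infty$ bound), leaving $O(\log R)$ dyadic values of $\lambda$; this accounts for the one log promised in the introduction. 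It then suffices to show, uniformly in $\lambda$ and in the fixed pair $(\tau_1,\tau_1')$, that
\[
\lambda^6 \,|U_\lambda| \lesssim (\log R)^{7 + O(\varepsilon)} \bigg(\sum_{\tau_{J'} \subset \O}\|g_{\tau_{J'}}\|_{L^\infty}^2\bigg)^{\!2} \bigg(\sum_{\tau_{J'} \subset \O}\|g_{\tau_{J'}}\|_{L^2}^2\bigg),
\]
where $U_\lambda := \{x \in \mc{B}_\O : \Lambda(x) \sim \lambda\}$; this is the level set estimate Proposition~\ref{prop:newprop34}.

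The main obstacle is therefore the level set estimate itself, whose proof (in Sections~\ref{7.3decomp}--\ref{highlow}) is the real heart of the paper and is carried out via the high-low method. Per the accounting in the introduction, its seven logs split as one from decomposing $\F^2$ into a concentration region $L$ and level-$k$ regions $\Omega_k$, two from passing from $|g_k|^2$ to $|g_k^h|^2$ on each $\Omega_k$ via the high-frequency projection, and four from two successive $\lambda^2$-losses in the wave-packet argument. Broadness is critical here: it ensures that the wave packets contributing at each $x \in U_\lambda$ come from two distinct intervals $\tau_1 \ne \tau_1'$, so that the quantitative transversality provided by Lemma~\ref{lem:tubeintersect} yields a Kakeya-type bound on $|\Omega_k|$. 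Compared with this, the parabolic rescaling and pigeonholing above are routine bookkeeping.
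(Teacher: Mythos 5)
Your overall architecture --- parabolic rescaling to $k=0$, pigeonholing onto a level set, and invoking Proposition \ref{prop:newprop34} --- is the same as the paper's, and the rescaling bookkeeping is correct. But the pigeonholing step has two genuine gaps. First, after fixing a single pair $(\tau_1,\tau_1')$, your level set $U_\lambda=\{x\in\mc{B}_{\O}: |g_{\tau_1}(x)g_{\tau_1'}(x)|^{1/2}\sim\lambda\}$ is \emph{not} the set that Proposition \ref{prop:newprop34} controls. That proposition bounds the set where $\max_{\sigma\ne\sigma'}|f_{\sigma}f_{\sigma'}|^{1/2}\sim\alpha$ \emph{and} $(\sum_{\sigma}|f_{\sigma}|^{6})^{1/6}\lsm(\log R)q^{r/2}\alpha$; both defining conditions are pegged to the value of the maximum over all pairs. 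At a point where your fixed pair's product is $\sim\lambda$ but the maximizing pair's product is much larger, neither condition holds with $\alpha=\lambda$, so the proposition does not apply to your set. The repair is to not fix a pair at all: bound the integrand pointwise by $M(x)^6$ with $M=\max_{\sigma\ne\sigma'}|g_{\sigma}g_{\sigma'}|^{1/2}$ and take level sets of $M$; membership in $\mc{B}_{\O}$ (condition \eqref{eq:alternative2}) then supplies the $\ell^6$ side condition for free. This also removes your extra $(\log R)^{2\varepsilon}$ loss, which as written would give exponent $8+\tfrac{7\varepsilon}{2}+2\varepsilon$ rather than the stated $8+\tfrac{7\varepsilon}{2}$.

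Second, your dyadic truncation does not yield $O(\log R)$ scales or a controllable tail. The ratio between $\|g\|_{L^\infty}$ and $R_{J'}^{-100}\max_{\tau_{J'}}\|g_{\tau_{J'}}\|_{L^2}$ is not bounded by any power of $R$, since the global $L^2$ norm of $g_{\tau_{J'}}$ can be arbitrarily large relative to its sup norm; for the same reason the tail bound $\lambda_0^4\int\Lambda^2\lsm\lambda_0^4\sum_{\tau_{J'}}\|g_{\tau_{J'}}\|_{L^2}^2$ is not dominated by $(\sum_{\tau_{J'}}\|g_{\tau_{J'}}\|_{L^{\infty}}^{2})^{2}\sum_{\tau_{J'}}\|g_{\tau_{J'}}\|_{L^{2}}^{2}$. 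The paper avoids both problems by localizing to squares $Q$ of side length $R_J^{1/2}$ and restricting $\alpha$ to $[R^{-1/2}\max_{\tau_J}\|f_{\tau_J}\|_{L^{\infty}(Q)},\,R\max_{\tau_J}\|f_{\tau_J}\|_{L^{\infty}(Q)}]$, whose endpoints differ by the factor $R^{3/2}$; the discarded piece contributes at most $R_JR^{-3}\max_{\tau_J}\|f_{\tau_J}\|_{L^{\infty}(Q)}^{6}$, which is converted to the right-hand side using the uncertainty principle ($|f_{\tau_J}|$ is constant on $Q$, hence $\|f_{\tau_J}\|_{L^{\infty}(Q)}^{2}\le\|f_{\tau_J}\|_{L^{2}(Q)}^{2}$) and then summed over $Q$ to recover the global $L^2$ norm. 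Some such localization is needed; a single global threshold of the kind you propose does not close the argument.
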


Assuming this for the moment, we see that \eqref{eq:broad_final} is bounded by
\begin{align*}
\eqref{eq:broad_final} & \lesssim q^{5r/2} (\log R)^{1+8+\frac{7 \varepsilon}{2}} \max_{k=0,\dots,J-1} \sum_{\tau_k} (\sum_{\tau_J \subset \tau_k} \|f_{\tau_J}\|_{L^{\infty}(\F^2)}^2)^2 (\sum_{\tau_J \subset \tau_k} \|f_{\tau_J}\|_{L^2(\F^2)}^2) \\
& \lesssim  (\log R)^{9+ 6 \varepsilon} (\sum_{\tau_J} \|f_{\tau_J}\|_{L^{\infty}(\F^2)}^2)^2 (\sum_{\tau_J} \|f_{\tau_J}\|_{L^2(\F^2)}^2).
\end{align*}
(Recall $q^{5r/2} \leq (\log R)^{5 \varepsilon/2}$.) Together with \eqref{eq:narrow_last} we finish the proof of Proposition~\ref{prop:bilinear_rescaled}.
It remains to prove Lemma~\ref{lem:narrow} and Proposition~\ref{prop:bilinear_broad}.

\begin{proof}[Proof of Lemma~\ref{lem:narrow}]
Let $\tau_{k + 1}^{\ast}$ be the $\tau_{k + 1} \subset \tau_{k}$ such that
\begin{equation*}
\max_{\tau_{k + 1} \subset \tau_{k}}|f_{\tau_{k + 1}}(x)| = |f_{\tau_{k + 1}^{\ast}}(x)|.
\end{equation*}
For $\tau_{k + 1} \subset \tau_k$ such that $\tau_{k + 1} \neq \tau_{k + 1}^{\ast}$, note that
\begin{equation*}
|f_{\tau_{k + 1}}(x)| \leq |f_{\tau_{k + 1}}(x)f_{\tau_{k + 1}^{\ast}}(x)|^{1/2} < (\log R)^{-1}q^{-r/2}|f_{\tau_{k}}(x)|.
\end{equation*}
Therefore
\begin{align*}
|f_{\tau_{k + 1}^{\ast}}(x)| &= |f_{\tau_k}(x) - \sum_{\tau_{k + 1} \neq \tau_{k + 1}^{\ast}} f_{\tau_{k + 1}}(x)|\\
&\geq (1 - \#\{\tau_{k + 1}: \tau_{k + 1} \subset \tau_k, \tau_{k + 1}\neq \tau_{k + 1}^{\ast}\}(\log R)^{-1}q^{-r/2})|f_{\tau_k}(x)|\\
&\geq (1 - (\log R)^{-1})|f_{\tau_k}(x)|.
\end{align*}
\end{proof}

To prove Proposition~\ref{prop:bilinear_broad}, we need the following level set estimate.

\begin{prop}\label{prop:newprop34}
Let $J = 1, \dots, N$ and let $f$ be with Fourier support in $\Xi_{1/R_J}$. For $\alpha > 0$, let
\begin{align*}%
U_{\alpha}(f) := \{x \in \F^{2}: \max_{\tau_1 \neq \tau_{1}'}|f_{\tau_1}(x)f_{\tau_{1}'}(x)|^{1/2} \sim \alpha \text{ and } (\sum_{\tau_1}|f_{\tau_1}(x)|^{6})^{1/6} \lsm (\log R)q^{r/2}\alpha\}.
\end{align*}
Then
\begin{align*}
\alpha^{6}|U_{\alpha}(f)| \lsm (\log R)^{7 + \frac{7 \vep}{2}}(\sum_{\tau_{J}}\nms{f_{\tau_{J}}}_{L^{\infty}(\F^{2})}^{2})^{2}
(\sum_{\tau_{J}} \nms{f_{\tau_J}}_{L^{2}(\F^{2})}^{2})
\end{align*}
where the implied constant is independent of $f$ and $\alpha$.
\end{prop}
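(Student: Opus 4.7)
The plan is to adapt the high-low frequency method of Guth--Maldague--Wang to the $\F$ setting, where the exactness of the uncertainty principle (Lemma~\ref{lem:uncertainty}, Corollary~\ref{cor:wavepacket}) replaces the heuristic localizations used over $\R$. At each intermediate scale $R_k$, I would introduce the squared wave-packet function
\[
g_k(x) := \sum_{\tau_k} |f_{\tau_k}(x)|^2.
\]
By Corollary~\ref{cor:wavepacket}, each $|f_{\tau_k}|^2$ is a sum of constants times indicator functions of translates of the dual parallelogram, so the Fourier support of $g_k$ is the union over $\tau_k$ of narrow planks through the origin with direction perpendicular to $(-2a,1)$. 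I would then perform a high-low split $g_k = g_k^\ell + g_k^h$, with $g_k^\ell$ defined as the restriction of $\wh{g_k}$ to a square of side roughly $R_{k-1}^{1/2}$ centered at the origin; in $\F$ this is an exact Fourier projection by Lemma~\ref{lem:char}, which is why tail losses disappear entirely.

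Next, I would decompose $\F^2 = L \sqcup \bigsqcup_{k=1}^{J-1} \Omega_k$, where $\Omega_k$ consists of points at which scale $k$ is the first one satisfying $g_k^h(x) \geq (C \log R)^{-1} g_k(x)$, and $L$ is the residual set where low-dominance persists at every scale. On $L$, iterating $g_k \leq (1 + (C\log R)^{-1}) g_k^\ell$ across all $\sim \log R$ scales reduces the estimate to the coarsest scale $R_J$, where the contribution is controlled trivially by the $L^\infty$--$L^2$ right-hand side. The union bound over $k$ costs one log, matching the first log advertised in the introduction. On each $\Omega_k$ the defining inequality gives $g_k \leq C(\log R)\, g_k^h$, and squaring this yields $g_k^2 \leq C^2(\log R)^2 |g_k^h|^2$, accounting for the 2 logs from \eqref{eq:7.55}.

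To control $\alpha^6 |U_\alpha \cap \Omega_k|$, I would combine the two constraints defining $U_\alpha$, namely $\max_{\tau_1\ne \tau_1'} |f_{\tau_1} f_{\tau_1'}| \sim \alpha^2$ and $\sum_{\tau_1}|f_{\tau_1}|^6 \lsm (\log R)^6 q^{3r} \alpha^6$, to promote the bilinear lower bound from scale $R_1$ down to scale $R_k$ and deduce $\alpha^2 \lsm g_k(x)$ up to $(\log R)^{O(\vep)}$ losses (using $q^r \lsm (\log R)^{\vep}$). Together with $g_k \lsm (\log R) g_k^h$, Chebyshev's inequality with an auxiliary truncation parameter $\lambda \sim (\log R)^2$ produces
\[
\alpha^6 |U_\alpha \cap \Omega_k| \lsm \lambda^2 (\log R)^2 \int_{\F^2}|g_k^h|^2,
\]
where the $\lambda^2$ is the source of the 4 logs of \eqref{eq:lambda2appear}. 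The remaining $L^2$ integral is estimated via Plancherel on the Fourier support of $g_k^h$: the supports of distinct $\wh{|f_{\tau_k}|^2}$ are sufficiently transverse planks that the frequency-side version of Lemma~\ref{lem:tubeintersect} converts the estimate into an essentially bilinear $L^4$ inequality at scale $R_k$. A final trivial $\ell^2$ decoupling from scale $R_k$ to $R_J$ produces the claimed $L^\infty$--$L^2$ mixed norm on the right-hand side.

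The main obstacle will be calibrating the high-low threshold $(C\log R)^{-1}$ against the Chebyshev parameter $\lambda$ so that the log losses add up to exactly $7 + 7\vep/2$ rather than something larger, and ensuring the bilinearity stated at scale $R_1$ propagates cleanly down to every intermediate scale $R_k$. The auxiliary pointwise bound $\sum_{\tau_1}|f_{\tau_1}|^6 \lsm (\log R)^6 q^{3r}\alpha^6$ on $U_\alpha$ is what makes this propagation possible, since it prevents the bilinear maximum from spreading over too many $(\tau_k,\tau_k')$ pairs as one refines scales; without this ``narrow'' constraint the transversality input from Lemma~\ref{lem:tubeintersect} could not be harvested efficiently.
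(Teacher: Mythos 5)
Your overall architecture --- square functions $g_k$, an exact $q$-adic high/low split, the decomposition $\F^2 = L \sqcup \bigsqcup_k \Om_k$, iterating low-dominance on $L$, and using $g_k \leq (\log R)\, g_k^h$ together with a High Lemma on $\Om_k$ --- matches the paper's, but two load-bearing components are missing and the argument as written does not close. The first is the wave-packet pruning. You define $g_k = \sum_{\tau_k}|f_{\tau_k}|^2$ with the \emph{unpruned} $f_{\tau_k}$; the paper instead builds pruned functions $f_J, f_{J-1},\dots,f_1$ by iteratively discarding the wave packets $1_{T_k}f_{k+1,\tau_k}$ with $\|1_{T_k}f_{k+1,\tau_k}\|_{L^\infty}>\ld$, and sets $g_k=\sum_{\tau_k}|f_{k+1,\tau_k}|^2$. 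This is exactly what makes the endgame work: after the High Lemma one faces $\sum_{\tau_k}\int|f_{k+1,\tau_k}|^4$, and the only way this becomes an admissible quantity is via $|1_Tf_{k+2,\tau_{k+1}}|^4\leq\ld^2|1_Tf_{k+2,\tau_{k+1}}|^2$ on the surviving packets (see \eqref{eq:lambda2appear}); with unpruned $f_{\tau_k}$ the resulting $L^4$ square-function quantity cannot be bounded. Relatedly, your $\ld\sim(\log R)^2$ is not the right object: the paper takes $\ld=(\log R)^2q^{r/2}\|g_J\|_{L^\infty}/\alpha$, and the dependence on $\|g_J\|_{L^\infty}/\alpha$ is forced by homogeneity --- your displayed inequality $\alpha^6|U_\alpha\cap\Om_k|\lsm\ld^2(\log R)^2\int|g_k^h|^2$ with $\ld$ a pure power of $\log R$ has degree $4$ in $f$ on the right but degree $6$ on the left. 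Pruning in turn requires proving that $f_\tau$ is well approximated by its pruned version on $\Om_k$ (Lemmas \ref{lem:3.22}--\ref{lem:3.27}), a step absent from your sketch.

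The second gap is your passage from $\alpha$ to $g_k$. A pointwise bound $\alpha^2\lsm g_k(x)$ on $U_\alpha\cap\Om_k$ would require low-dominance at all scales \emph{below} $k$, i.e.\ ordering the $\Om_k$ from fine to coarse; but the approximation-by-pruned-wave-packets step needs control of $g_j$ for $j>k$ by $\|g_J\|_{L^\infty}$, which forces the opposite ordering (the paper's $\Om_k$ records the \emph{coarsest} scale at which the high part dominates). The paper avoids any pointwise lower bound on $g_k$: it bounds $\alpha^4|U_\alpha\cap\Om_k|$ by $\sum_{\tau_1\neq\tau_1'}\int_{\Om_k}|f_{k+1,\tau_1}f_{k+1,\tau_1'}|^2$ and applies the bilinear restriction estimate (Lemma \ref{lem_bilinear} --- which is where Lemma \ref{lem:tubeintersect} actually enters, not in the High Lemma) on each square of side $R_k^{1/2}$ to reach $q^{r/2}\int_{\Om_k}g_k^2$. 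This transversality step is the bridge from scale $R_1$ to scale $R_k$; without it, or without an ordering of the $\Om_k$ compatible with both your Chebyshev step and the pruning approximation, the estimate on $\Om_k$ does not go through.
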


\begin{proof}[Proof of Proposition~\ref{prop:bilinear_broad}]
By the same rescaling as in \eqref{rescale1}-\eqref{rescale2}, it suffices to prove \eqref{eq:bilinear_broad} for $k = 0$.
For a given $J_0 = 1, 2, \ldots, N$ and $k_0 = 1, 2, \ldots, J_0 - 1$,
the case of $(k, J) = (k_0, J_0)$ in \eqref{eq:bilinear_broad} follows from the case
$(k, J) = (0, J_0 - k_0)$. Note also that in this rescaling, it is important
that in the definition of $\mc{B}_{\tau_k}$ we have the condition $x \in \F^2$ in \eqref{eq:alternative1} rather than a smaller spatial region.

Now to prove \eqref{eq:bilinear_broad} for $k = 0$, for each square $Q_{R_J^{1/2}} \subset \F^2$ of side length $R_J^{1/2}$, we estimate
\begin{equation}\label{eq:br0dom1}
\int_{\mc{B} \cap Q_{R_J^{1/2}}}\max_{\tau_1 \neq \tau_{1}'}|f_{\tau_1}(x)f_{\tau_{1}'}(x)|^{3}
\end{equation}
where we write $\mc{B} := \mc{B}_{\tau_0}$ for brevity.
Let
\[
\mc{B}_{\text{small}}(Q_{R_J^{1/2}}) := \{x \in \mc{B} \cap Q_{R_J^{1/2}} : \max_{\tau_1 \neq \tau_{1}'}|f_{\tau_1}(x)f_{\tau_{1}'}(x)|^{1/2} \leq R^{-1/2}\max_{\tau_{J}}\nms{f_{\tau_{J}}}_{L^{\infty}(Q_{R_J^{1/2}})}\}
\]
and partition $(\mc{B} \cap Q_{R_J^{1/2}}) \bs \mc{B}_{\text{small}}(Q_{R_J^{1/2}})$ into $O(\log R)$ sets where
\begin{equation*}
\max_{\tau_1 \neq \tau_{1}'}|f_{\tau_1}(x)f_{\tau_{1}'}(x)|^{1/2}\sim \alpha
\quad \text{and} \quad
R^{-1/2}\max_{\tau_J} \nms{f_{\tau_{J}}}_{L^{\infty}(Q_{R_J^{1/2}})} \leq \alpha \leq R\max_{\tau_J} \nms{f_{\tau_{J}}}_{L^{\infty}(Q_{R_J^{1/2}})}.
\end{equation*}
By pigeonholing, there exists an $\alpha_{\ast}$ such that
\begin{align} \label{3.6}
 \eqref{eq:br0dom1} \lsm R_J R^{-3} \max_{\tau_{J}}\nms{f_{\tau_{J}}}_{L^{\infty}(Q_{R_J^{1/2}})}^{6} + (\log R)\alpha_{\ast}^{6}|Q_{R_{J}^{1/2}} \cap U_{\alpha_{\ast}}(f)|.
\end{align}
But by the uncertainty principle (see discussion after Lemma~\ref{lem:uncertainty}), $|f_{\tau_J}|$ is constant on $Q_{R_J^{1/2}}$, so
\begin{equation*}
\|f_{\tau_J}\|_{L^{\infty}(Q_{R_J^{1/2}})}^2 = R_J^{-1} \|f_{\tau_J}\|_{L^2(Q_{R_J^{1/2}})}^2 \leq \|f_{\tau_J}\|_{L^2(Q_{R_J^{1/2}})}^2.
\end{equation*}
Thus
\begin{equation*}
\max_{\tau_J} \|f_{\tau_J}\|_{L^{\infty}(Q_{R_J^{1/2}})}^6 \leq \max_{\tau_J} \|f_{\tau_J}\|_{L^{\infty}(\F^2)}^4 \|f_{\tau_J}\|_{L^2(Q_{R_J^{1/2}})}^2 \leq ( \sum_{\tau_J} \|f_{\tau_J}\|_{L^{\infty}(\F^2)}^2 )^2 \sum_{\tau_J} \|f_{\tau_J}\|_{L^2(Q_{R_J^{1/2}})}^2.
\end{equation*}
Plugging this back into \eqref{3.6}, and summing over $Q_{R_J^{1/2}}$, we obtain
\begin{align*}
\int_{\mc{B}}\max_{\tau_1 \neq \tau_{1}'}|f_{\tau_1}(x)f_{\tau_{1}'}(x)|^{3}
&\lesssim R_J R^{-3} ( \sum_{\tau_J} \|f_{\tau_J}\|_{L^{\infty}(\F^2)}^2 )^2 \sum_{\tau_J} \|f_{\tau_J}\|_{L^2(\F^2)}^2 + (\log R) \alpha_*^6 |U_{\alpha_*}(f)| \\
&\lesssim (\log R)^{8+ \frac{7 \varepsilon}{2}} ( \sum_{\tau_J} \|f_{\tau_J}\|_{L^{\infty}(\F^2)}^2 )^2 \sum_{\tau_J} \|f_{\tau_J}\|_{L^2(\F^2)}^2
\end{align*}
where the last inequality is a consequence of Proposition~\ref{prop:newprop34}. This finishes our proof.
\end{proof}

The rest of the argument goes into proving Proposition~\ref{prop:newprop34}.

\section{High/Low decomposition: Proof of Proposition~\ref{prop:newprop34}}\label{highlow}

\subsection{Square functions and pruning of wave packets}

Fix $J = 1, \dots, N$ and fix $f$ with Fourier support in $\Xi_{1/R_J}$.
For $x \in \F^2$ and $\ld$ to be chosen later (see \eqref{eq:lambda}), define
\begin{align*}
g_{J}(x) &:= \sum_{\tau_{J}}|f_{\tau_{J}}(x)|^{2} = \sum_{\tau_{J}}\sum_{T_{J} \in \T(\tau_{J})}|(1_{T_{J}}f_{\tau_{J}})(x)|^{2}\\
f_{J}(x) &:= \sum_{\tau_{J}}\sum_{\st{T_{J} \in \T(\tau_J)\\\|1_{T_{J}}f_{\tau_{J}}\|_{L^{\infty}(\F^{2})} \leq \ld}}(1_{T_{J}}f_{\tau_{J}})(x)
\end{align*}
and for $k = J - 1, J - 2, \ldots, 1$, define
\begin{align*}
g_{k}(x)&:= \sum_{\tau_k}|(f_{k + 1, \tau_k})(x)|^{2} = \sum_{\tau_k}\sum_{T_k \in \T(\tau_k)}|(1_{T_k}f_{k + 1, \tau_k})(x)|^{2}\\
f_{k}(x) &:= \sum_{\tau_k}\sum_{\st{T_k \in \T(\tau_k)\\\|1_{T_k}f_{k + 1, \tau_k}\|_{L^{\infty}(\F^{2})} \leq \ld}}(1_{T_k}f_{k + 1, \tau_k})(x).
\end{align*}
Note that the Fourier support of $g_k$ is contained in a $R_{k}^{-1/2}$ square centered at the origin
and hence $g_k$ is constant on squares of side length $R_{k}^{1/2}$.
Additionally by definition of the $f_k$,
\begin{align}\label{eq:prunedmon}
|f_{k, \tau_k}| \leq |f_{k + 1, \tau_k}|
\end{align}
and so
\begin{align*}
\int_{\F^{2}}|f_{k}|^{2} = \sum_{\tau_k}\int_{\F^{2}}|f_{k, \tau_k}|^{2} \leq \sum_{\tau_k}\int_{\F^{2}}|f_{k + 1, \tau_k}|^{2} = \int_{\F^{2}}|f_{k + 1}|^{2},
\end{align*}
where in the last step we applied $L^2$ orthogonality. Therefore
\begin{align}\label{eq:increase}
\int_{\F^{2}}|f_1|^{2} \leq \int_{\F^{2}}|f_2|^{2} \leq \cdots \leq \int_{\F^{2}}|f_{J}|^{2} \leq \int_{\F^{2}}|f|^{2}.
\end{align}
This matches the intuition that when passing from $f_{J}$ to $f_1$ we are throwing away wave packets and therefore at least at the $L^2$ level, we have a monotonicity relation as above.

\subsection{High and low lemmas}

For $k = 1, \dots, J-1$, define
\[
g_k^l = g_k \ast R_{k + 1}^{-1}1_{B(0, R_{k + 1}^{1/2})} \quad \text{and} \quad g_k^h = g_k - g_k^l.
\]
Note that $g_k$ (and $g_{k}^h$) is Fourier supported on the union of  $\{|\xi| \leq R_k^{-1/2}, |\eta - 2\alpha \xi| \leq R_k^{-1}\}$ where $\{\alpha\}$ is a collection
of points chosen from $\{\tau_k\}$, with one $\alpha$ for each $\tau_k$.
Additionally, observe that since
\begin{align}\label{qrk1ft}
R_{k + 1}^{-1}\widehat{1}_{B(0, R_{k + 1}^{1/2})} = 1_{B(0, R_{k + 1}^{-1/2})}
\end{align}
we have
$\wh{g_{k}^{l}} = \wh{g_k}1_{B(0, R_{k + 1}^{-1/2})}$ and so $g_{k}^{l}$ is just the restriction
of $g_k$ to frequencies less than $R_{k + 1}^{-1/2}$.
By definition of $g_k$ and $g_{k}^{l}$, both are nonnegative functions.

\begin{lem}[Low Lemma]\label{201218lem4_4}
For $k = 1, \dots, J-1$, we have $g_k^l \leq g_{k+1}$.
\end{lem}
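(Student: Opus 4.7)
The plan is to prove the identity
\[
g_k^l = \sum_{\tau_{k+1}} |f_{k+1,\tau_{k+1}}|^2,
\]
where $f_{k+1,\tau_{k+1}}$ denotes the Fourier restriction of $f_{k+1}$ to $\tau_{k+1} \times \F$ (so that by Fourier disjointness $f_{k+1,\tau_k} = \sum_{\tau_{k+1} \subset \tau_k} f_{k+1,\tau_{k+1}}$), with the convention $f_{J+1,\tau_J} := f_{\tau_J}$ to cover the case $k+1 = J$. The lemma will then follow from a pointwise monotonicity that comes from pruning wave packets.

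To obtain the identity, I expand
\[
|f_{k+1,\tau_k}|^2 = \sum_{\tau_{k+1},\tau_{k+1}' \subset \tau_k} f_{k+1,\tau_{k+1}} \overline{f_{k+1,\tau_{k+1}'}}
\]
and examine the Fourier support of each summand. Each $f_{k+1,\tau_{k+1}}$ has Fourier support in the parabolic parallelogram $\{\xi \in \tau_{k+1}, |\eta-\xi^2| \leq R_{k+1}^{-1}\}$; this can be verified inductively, since $f$ starts with Fourier support in $\Xi_{1/R_J}$, and multiplying by $1_{T_j}$ at each pruning level preserves such a Fourier support (a short convolution computation using Lemma~\ref{lem:char}, $|2a| \leq 1$ for $a \in \O$, and the ultrametric inequality). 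For a diagonal term ($\tau_{k+1} = \tau_{k+1}'$), the difference set accordingly lies in the origin-centered parallelogram $\{|\xi| \leq R_{k+1}^{-1/2},\, |\eta - 2a\xi| \leq R_{k+1}^{-1}\}$ for any $a \in \tau_{k+1}$, and $|2a\xi| \leq R_{k+1}^{-1/2}$ then forces $|\eta| \leq R_{k+1}^{-1/2}$ by the ultrametric inequality, so the Fourier support sits inside $B(0,R_{k+1}^{-1/2})$. For an off-diagonal term ($\tau_{k+1} \neq \tau_{k+1}'$), the first coordinate of the difference lies in $\tau_{k+1}-\tau_{k+1}'$, and the $q$-adic separation of disjoint intervals of length $R_{k+1}^{-1/2}$ recorded in Section~\ref{basic} gives $|\xi| \geq q R_{k+1}^{-1/2}$, placing its Fourier support outside $B(0,R_{k+1}^{-1/2})$.

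By \eqref{qrk1ft}, convolving with $R_{k+1}^{-1} 1_{B(0,R_{k+1}^{1/2})}$ corresponds on the Fourier side to multiplication by $1_{B(0,R_{k+1}^{-1/2})}$, so the off-diagonal terms are annihilated while the diagonal terms are unchanged, proving the identity. For the final pointwise comparison, given $x$ and $\tau_{k+1}$ let $T_{k+1}^* \in \T(\tau_{k+1})$ be the unique translate containing $x$. By the definition of $f_{k+1}$, either $T_{k+1}^*$ survives the pruning, in which case $f_{k+1,\tau_{k+1}}(x) = f_{k+2,\tau_{k+1}}(x)$, or it is discarded, in which case $f_{k+1,\tau_{k+1}}(x) = 0$; either way $|f_{k+1,\tau_{k+1}}(x)| \leq |f_{k+2,\tau_{k+1}}(x)|$, and summing over $\tau_{k+1}$ yields $g_k^l \leq g_{k+1}$.

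The main point requiring care is verifying that the diagonal squared term has Fourier support in the \emph{origin-centered square} $B(0,R_{k+1}^{-1/2})$ rather than merely a tilted parallelogram through the origin: this is a genuinely $q$-adic phenomenon, enabled by $|2a| \leq 1$ and the ultrametric inequality, and is precisely one of the places where working over $\F$ rather than $\R$ pays off by turning a heuristic into a rigorous identity.
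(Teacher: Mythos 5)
Your proof is correct and follows essentially the same route as the paper's: expand $g_k$ into the bilinear sum over $\tau_{k+1},\tau_{k+1}'$, observe on the Fourier side that the low-pass projection kills the off-diagonal terms and leaves the diagonal terms (which are Fourier supported in $B(0,R_{k+1}^{-1/2})$) unchanged, and finish with the pointwise pruning monotonicity \eqref{eq:prunedmon}. The only difference is that you spell out the $q$-adic support computations that the paper leaves implicit.
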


\begin{proof}[Proof of Lemma \ref{201218lem4_4}]
We have
\begin{align}\label{lowlemeq1}
g_{k}^{l} = g_{k} \ast R_{k + 1}^{-1}1_{B(0, R_{k + 1}^{1/2})} = \sum_{\tau_k}\sum_{\tau_{k + 1}, \tau_{k + 1}' \subset \tau_k}(f_{k + 1, \tau_{k + 1}}\ov{f_{k + 1, \tau_{k + 1}'}})\ast R_{k + 1}^{-1}1_{B(0, R_{k + 1}^{1/2})}.
\end{align}
Taking a Fourier transform we see that
\begin{align*}
(f_{k + 1, \tau_{k + 1}}\ov{f_{k + 1, \tau_{k + 1}'}})\ast R_{k + 1}^{-1}1_{B(0, R_{k + 1}^{1/2})} &= \begin{cases}
|f_{k + 1, \tau_{k + 1}}|^{2} \ast R_{k + 1}^{-1}1_{B(0, R_{k + 1}^{1/2})} & \text{ if } \tau_{k + 1} = \tau_{k + 1}'\\
0 & \text{ otherwise}
\end{cases}\\
&=
\begin{cases}
|f_{k + 1, \tau_{k + 1}}|^{2}  & \text{ if } \tau_{k + 1} = \tau_{k + 1}'\\
0 & \text{ otherwise}
\end{cases}
\end{align*}
where the last equality is because of \eqref{qrk1ft} and that $|f_{k + 1, \tau_{k + 1}}|^{2}$
is Fourier supported in $B(0, R_{k + 1}^{-1/2})$.
Thus \eqref{lowlemeq1} is equal to
\begin{align*}
\sum_{\tau_{k + 1}}|f_{k + 1, \tau_{k + 1}}|^{2} \leq \sum_{\tau_{k + 1}}|f_{k + 2, \tau_{k + 1}}|^{2} = g_{k + 1}
\end{align*}
by \eqref{eq:prunedmon}. Here if $k = J - 1$, we interpret $f_{k + 2}$ to mean $f$.
\end{proof}

\begin{lem}[High Lemma]\label{201218lem4_5} For $k = 1, \dots, J-1$,
$$\int_{\F^2} |g_k^h|^2 \leq q^{r/2} \sum_{\tau_k}\int_{\F^2}|f_{k + 1, \tau_k}|^{4}. $$
\end{lem}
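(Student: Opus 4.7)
The plan is to pass to the Fourier side via Plancherel and reduce to an overlap count for the parallelograms on which the Fourier transforms of $|f_{k+1,\tau_k}|^2$ are supported. Since $R_{k+1}^{-1}\wh{1_{B(0,R_{k+1}^{1/2})}} = 1_{B(0,R_{k+1}^{-1/2})}$ by \eqref{qrk1ft}, one has $\wh{g_k^h} = \wh{g_k}\cdot 1_H$ where $H := \F^2 \setminus B(0,R_{k+1}^{-1/2})$, and Plancherel gives $\int |g_k^h|^2 = \int_H |\wh{g_k}|^2$. The excerpt already notes that $\wh{g_k} = \sum_{\tau_k}\wh{|f_{k+1,\tau_k}|^2}$ with each summand Fourier supported in the parallelogram $Q_{\tau_k} := \{(\xi,\eta) \in \F^2 : |\xi| \leq R_k^{-1/2},\ |\eta - 2a_{\tau_k}\xi| \leq R_k^{-1}\}$, where $a_{\tau_k}$ is any fixed point of $\tau_k$. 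The lemma then reduces to showing that on $H$, every point lies in at most $q^{r/2}$ of these parallelograms.

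For the overlap count, suppose $(\xi,\eta) \in H \cap Q_{\tau_k}$. If $|\xi| \leq R_{k+1}^{-1/2}$, then $|2a_{\tau_k}\xi| \leq R_{k+1}^{-1/2}$ (using $|a_{\tau_k}| \leq 1$ and $|2|=1$ for odd $q$), and since $k \geq 1$ forces $R_k^{-1} \leq R_{k+1}^{-1/2}$, the ultrametric inequality combined with $|\eta - 2a_{\tau_k}\xi| \leq R_k^{-1}$ gives $|\eta| \leq R_{k+1}^{-1/2}$, contradicting $(\xi,\eta) \in H$. Hence $|\xi| > R_{k+1}^{-1/2}$, i.e.\ $|\xi| \geq qR_{k+1}^{-1/2}$ by the discrete range of the $q$-adic norm. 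The constraint $|\eta - 2a_{\tau_k}\xi| \leq R_k^{-1}$ then pins $a_{\tau_k}$ to the $q$-adic ball of radius $R_k^{-1}/|\xi| \leq R_{k+1}^{1/2}/(qR_k)$ about $\eta/(2\xi)$; since distinct $\tau_k$'s are disjoint balls of radius $R_k^{-1/2}$ inside $\O$, at most $\max\{1,R_{k+1}^{1/2}/(qR_k^{1/2})\} = q^{r/2-1} \leq q^{r/2}$ of them fit.

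With the overlap bound in hand, a pointwise Cauchy--Schwarz on $H$ gives $|\wh{g_k}(\xi,\eta)|^2 \leq q^{r/2}\sum_{\tau_k}|\wh{|f_{k+1,\tau_k}|^2}(\xi,\eta)|^2$. Using Plancherel twice,
\begin{align*}
\int |g_k^h|^2 = \int_H |\wh{g_k}|^2 \leq q^{r/2}\sum_{\tau_k}\int_H |\wh{|f_{k+1,\tau_k}|^2}|^2 \leq q^{r/2}\sum_{\tau_k}\int|f_{k+1,\tau_k}|^4,
\end{align*}
which is the desired bound. The only delicate point is the overlap count in the previous paragraph: it exploits both the narrowness of the $Q_{\tau_k}$ in the $\eta$-direction (thickness $R_k^{-1}$ rather than $R_k^{-1/2}$) and the clean $q$-adic separation between distinct $\tau_k$'s, which together turn what would be only a heuristic count in the real setting into a sharp estimate losing precisely the factor $q^{r/2}$.
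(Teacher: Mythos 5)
Your proof is correct and follows essentially the same approach as the paper: the heart of both arguments is the observation that outside $B(0,R_{k+1}^{-1/2})$ the Fourier supports of the $|f_{k+1,\tau_k}|^2$ overlap at most $q^{r/2}$ times, proved by the same case analysis on $|\xi|$. The only (cosmetic) difference is that you apply Cauchy--Schwarz pointwise on the Fourier side, whereas the paper first localizes to squares of side length $R_{k+1}$ and applies Cauchy--Schwarz in physical space; your bookkeeping is if anything slightly cleaner.
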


\begin{proof}[Proof of Lemma \ref{201218lem4_5}]
It suffices to partition $\F^2$ into squares with side length $R_{k + 1}$ and prove the estimate on each such square.
Fix an arbitrary square $B \subset \F^2$ of side length $R_{k + 1}$.
We have by Plancherel,
\begin{align*}
\int_{B}|g_{k}^{h}|^{2} = \int \ov{\wh{g_{k}^h}}(\wh{g_{k}^h} \ast \wh{1_B}).
\end{align*}
Since $g_{k}^{h}$ is Fourier supported outside $B(0, R_{k + 1}^{-1/2})$
and $1_{B}$ is Fourier supported in $B(0, R_{k + 1}^{-1})$, $\wh{g_{k}^h} \ast \wh{1_B}$
is supported in $B(0, R_{k}^{-1/2}) \setminus B(0, R_{k + 1}^{-1/2})$ by the ultrametric inequality.
Therefore the above is equal to
\begin{align}\label{higheq1}
\sum_{\tau_k}\int_{B(0, R_{k}^{-1/2}) \setminus B(0, R_{k + 1}^{-1/2})}\ov{(|f_{k + 1, \tau_k}|^{2})^{\wedge}}\sum_{\tau_{k}'}((|f_{k + 1, \tau_{k}'}|^{2})^{\wedge} \ast \wh{1_B}).
\end{align}
We claim that for each $\tau_k$, the Fourier support of $|f_{k + 1, \tau_k}|^{2}$ outside $B(0, R_{k + 1}^{-1/2})$ only intersects $q^{r/2}$ many
Fourier supports of the $|f_{k + 1, \tau_{k}'}|^{2}$ outside $B(0, R_{k + 1}^{-1/2})$.

Indeed, suppose there exists $(\xi,\eta)$ such that $\max\{|\xi|,|\eta|\} > R_{k+1}^{-1/2}$
and
\[
|\xi| \leq R_k^{-1/2}, \qquad |\eta-2\alpha\xi|, |\eta-2\alpha'\xi| \leq R_k^{-1}
\]
for some $\alpha \in \tau_k$ and $\alpha' \in \tau_{k}'$.
Then
\[
|2(\alpha-\alpha')\xi| \leq R_k^{-1},
\]
and so if $|\xi| > R_{k+1}^{-1/2}$, then
\[
|\alpha-\alpha'| \leq R_k^{-1} / R_{k+1}^{-1/2} = R_k^{-1/2} q^{r/2}.
\]
Else $|\xi| < R_{k+1}^{-1/2}$ and $|\eta| > R_{k+1}^{-1/2}$, which implies $|\eta - 2\alpha \xi| = \max\{|\eta|, |2\alpha \xi|\} > R_{k+1}^{-1/2}$,
contradicting $|\eta - 2\alpha \xi| \leq R_k^{-1}$ if $k \geq 1$. So $|\alpha-\alpha'| \leq R_k^{-1/2} q^{r/2}$, the number of overlaps is just $q^{r/2}$ times.

Thus we have
\begin{align*}
\sum_{\tau_k}&\int_{B(0, R_{k}^{-1/2})\setminus B(0, R_{k + 1}^{-1/2})}\ov{(|f_{k + 1, \tau_k}|^{2})^{\wedge}}\sum_{\tau_{k}': d(\tau_k, \tau_{k}') \leq R_{k}^{-1/2}q^{r/2}}(|f_{k + 1, \tau_{k}'}|^{2})^{\wedge} \ast \wh{1_B}\\
&=\sum_{\tau_k}\int_{B}|f_{k + 1, \tau_k}|^{2}\ast (\widecheck{1}_{B(0, R_{k}^{-1/2})} - \widecheck{1}_{B(0, R_{k}^{-1/2})})\sum_{\tau_{k}': d(\tau_k, \tau_{k}') \leq R_{k}^{-1/2}q^{r/2}}|f_{k + 1, \tau_{k}'}|^{2}\\
&\leq \sum_{\tau_k}\int_{B}|f_{k + 1, \tau_k}|^{2}\sum_{\tau_{k}': d(\tau_k, \tau_{k}') \leq R_{k}^{-1/2}q^{r/2}}|f_{k + 1, \tau_{k}'}|^{2}
\end{align*}
where in the last inequality we have used that $|f_{k + 1, \tau_k}|^{2} \ast \widecheck{1}_{B(0, R_{k}^{-1/2})} = |f_{k + 1, \tau_k}|^{2}$,
$\widecheck{1}_{B(0, R_{k + 1}^{-1/2})}$ is nonnegative, and that the convolution of two nonnegative functions is also nonnegative.
Applying Cauchy-Schwarz then gives that \eqref{higheq1} is
\begin{align*}
\leq q^{r/2}\sum_{\tau_k}\int_{B}|f_{k + 1, \tau_k}|^{4}
\end{align*}
and summing over all $B \subset \F^2$ of side length $R_{k + 1}$ then completes the proof.
\end{proof}

\subsection{Decomposition into high and low sets}\label{7.3decomp}

Let
\begin{equation*}
\Omega_{J-1} = \{x \in \F^2 \colon g_{J-1}(x) \leq (\log R) g_{J-1}^h(x)\}
\end{equation*}
For $k = J-2, J-3, \dots, 1$, define
\begin{align*} %
\Omega_k = \{x \in \F^2 \setminus (\Omega_{k+1} \cup \dots \cup \Omega_{J-1}) \colon g_k(x) \leq (\log R) g_k^h(x)\}
\end{align*}
Finally,
\begin{equation*}
L = \F^2 \setminus (\Omega_1 \cup \dots \cup \Omega_{J-1}).
\end{equation*}

Note that $g_k$ is constant on squares of size $R_{k}^{1/2}$. By definition,
$g_{k}^{l}$ is constant on squares of size $R_{k + 1}^{1/2} > R_{k}^{1/2}$.
Therefore $g_{k}^{h}$ is also constant on squares of size $R_{k}^{1/2}$.

One can view the construction of the $\Om_k$ as follows.
Partition $\F^2$ first into squares of size $R_{J  - 1}^{1/2}$.
Then $\Om_{J - 1}$ is a union of those squares on which $g_{J - 1}(x) \leq (\log R) g_{J - 1}^{h}(x)$
where here we have used that both $g_{J - 1}$ and $g_{J - 1}^{h}$ are constant on each such square of size $R_{J - 1}^{1/2}$.

Next, partition each of the remaining squares not chosen to be part of $\Om_{J - 1}$
into squares of size $R_{J - 2}^{1/2}$.
From these squares of size $R_{J - 2}^{1/2}$, $\Om_{J - 2}$ is the union of those
squares on which $g_{J - 2}(x) \leq (\log R)g_{J - 2}^{h}(x)$.
Repeat this until we have defined $\Om_{1}$ after which we call
the remaining set $L$ (which can be written as the union of squares of size $R_{1}^{1/2}$).

To prove Proposition~\ref{prop:newprop34}, note that
\begin{align} \label{eq:goal2}
\alpha^{6}|U_{\alpha}(f)| \leq \alpha^{6}|U_{\alpha}(f) \cap L| + \sum_{k = 1}^{J - 1}\alpha^{6}|U_{\alpha}(f) \cap \Omega_{k}|.
\end{align}
In view of the definition of the set $U_{\alpha}(f)$, to control the right hand side, we need to understand the size of $\max_{\tau_1 \ne \tau_1'} |f_{\tau_1}(x) f_{\tau_1'}(x)|$ on $\Omega_k$ (for $k = 1, \dots, J-1$) and on $L$. We do so in the next section, and then use it to bound the right hand side of \eqref{eq:goal2}.

\subsection{Approximation by pruned wave packets}
\begin{lem}\label{lem:3.22}
Let $k = 1, 2, \ldots, J - 1$ and $|\tau| \geq R_{k}^{-1/2}$. Then for $x \in \F^2$,
\begin{align*}%
|\sum_{\tau_k \subset \tau}f_{k + 1, \tau_k}(x) - \sum_{\tau_k \subset \tau}f_{k, \tau_k}(x)| \leq \ld^{-1}g_{k}(x).
\end{align*}
\end{lem}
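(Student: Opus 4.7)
My plan is to prove the lemma by analyzing one $\tau_k$ at a time and then summing via the triangle inequality; the work is pointwise and quite direct once one unwinds the definitions.

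First, I would fix $\tau_k$ and examine $f_{k+1,\tau_k} - f_{k,\tau_k}$. By construction,
\[
f_{k,\tau_k} = \sum_{\substack{T_k \in \T(\tau_k)\\ \|1_{T_k} f_{k+1,\tau_k}\|_{L^\infty} \leq \lambda}} 1_{T_k} f_{k+1,\tau_k},
\]
while $f_{k+1,\tau_k}$ is the sum of $1_{T_k} f_{k+1,\tau_k}$ over \emph{all} $T_k \in \T(\tau_k)$ (since these tile $\F^2$ by Lemma~\ref{lem:geom}(\ref{lem2.4c})). Hence
\[
f_{k+1,\tau_k}(x) - f_{k,\tau_k}(x) = \sum_{\substack{T_k \in \T(\tau_k)\\ \|1_{T_k} f_{k+1,\tau_k}\|_{L^\infty} > \lambda}} (1_{T_k} f_{k+1,\tau_k})(x).
\]
Since the $T_k \in \T(\tau_k)$ are pairwise disjoint (Lemma~\ref{lem:geom}(b)), at most one term in this sum is nonzero at a given $x$.

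Next, I would use the uncertainty principle (Corollary~\ref{cor:wavepacket}): $f_{k+1,\tau_k}$ has Fourier support in $\{(\xi,\eta) : \xi \in \tau_k,\, |\eta - \xi^2| \leq R_k^{-1}\}$ (because the Fourier support of $f_{k+1}$ sits in $\Xi_{1/R_{k+1}} \subset \Xi_{1/R_k}$), so $|f_{k+1,\tau_k}|$ is constant on each $T_k \in \T(\tau_k)$. In particular, if $x \in T_k$ then $|f_{k+1,\tau_k}(x)| = \|1_{T_k} f_{k+1,\tau_k}\|_{L^\infty}$. Therefore, when the single surviving term is nonzero at $x$, we have $|f_{k+1,\tau_k}(x)| > \lambda$, which gives the pointwise bound
\[
|f_{k+1,\tau_k}(x) - f_{k,\tau_k}(x)| \leq |f_{k+1,\tau_k}(x)| \cdot \mathbbm{1}_{|f_{k+1,\tau_k}(x)|>\lambda} \leq \lambda^{-1} |f_{k+1,\tau_k}(x)|^2.
\]

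Finally, I would sum over $\tau_k \subset \tau$ and apply the triangle inequality to obtain
\[
\Big| \sum_{\tau_k \subset \tau} f_{k+1,\tau_k}(x) - \sum_{\tau_k \subset \tau} f_{k,\tau_k}(x) \Big| \leq \lambda^{-1} \sum_{\tau_k \subset \tau} |f_{k+1,\tau_k}(x)|^2 \leq \lambda^{-1} g_k(x),
\]
using the definition of $g_k$ and extending the sum to all $\tau_k$ (the terms are nonnegative). The argument is essentially bookkeeping; the only conceptual input is the $q$-adic uncertainty principle, which gives the clean identity $|f_{k+1,\tau_k}| = \|1_{T_k} f_{k+1,\tau_k}\|_{L^\infty}$ on each $T_k$ and makes the standard real-variable heuristic rigorous. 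Accordingly I do not anticipate any substantive obstacle.
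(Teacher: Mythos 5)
Your proposal is correct and follows essentially the same route as the paper: decompose $f_{k+1,\tau_k}-f_{k,\tau_k}$ into the discarded wave packets $1_{T_k}f_{k+1,\tau_k}$ with $\|1_{T_k}f_{k+1,\tau_k}\|_{L^\infty}>\lambda$, note that at most one such $T_k$ contains $x$, use the $q$-adic uncertainty principle to identify $|f_{k+1,\tau_k}(x)|$ with the $L^\infty$ norm on that $T_k$, bound $|v|\mathds{1}_{|v|>\lambda}\le\lambda^{-1}|v|^2$, and sum over $\tau_k$ against $g_k$. No gaps.
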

\begin{proof}[Proof of Lemma \ref{lem:3.22}]
Fix $x \in \F^2$. We have
\begin{align}
|\sum_{\tau_k \subset \tau}f_{k + 1, \tau_{k}}(x) - \sum_{\tau_{k} \subset \tau}f_{k, \tau_k}(x)| &= |\sum_{\tau_k \subset \tau}\sum_{\substack{T_k \in \T(\tau_k)\\\|1_{T_k}f_{k + 1, \tau_k}\|_{L^{\infty}(\F^2)} > \ld}}(1_{T_k}f_{k + 1, \tau_k})(x)|\nonumber\\
&\leq \sum_{\tau_k \subset \tau}\sum_{\substack{T_k \in \T(\tau_k)\\\|1_{T_k}f_{k + 1, \tau_k}\|_{L^{\infty}(\F^2)} > \ld}}|(1_{T_k}f_{k + 1, \tau_k})(x)|\label{lem73eq1}.
\end{align}
For each $\tau_k$, there exists exactly a parallelogram $\mc{T}_{k}(x)$ depending on $x$ in $\T(\tau_k)$ such that
$x \in \mc{T}_{k}(x)$. If for this parallelogram, $\nms{1_{\mc{T}_{k}(x)}f_{k + 1, \tau_k}}_{L^{\infty}(\F^2)} \leq \ld$, then
the inner sum for this particular $\tau_k$ in \eqref{lem73eq1} is equal to 0.
Otherwise,
\begin{align*}
|(1_{T_k}f_{k + 1, \tau_k})(x)| \leq \frac{\nms{1_{\mc{T}_{k}(x)}f_{k + 1, \tau_k}}_{L^{\infty}(\F^2)}^{2}}{\ld}
\end{align*}
and hence
\begin{align*}
\sum_{\substack{T_k \in \T(\tau_k)\\\|1_{T_k}f_{k + 1, \tau_k}\|_{L^{\infty}(\F^2)} > \ld}}|(1_{T_k}f_{k + 1, \tau_k})(x)| \leq \ld^{-1}\nms{1_{\mc{T}_{k}(x)}f_{k + 1, \tau_k}}_{L^{\infty}(\F^2)}^{2}.
\end{align*}
Since $|f_{k + 1, \tau_k}|$ is constant on $\mc{T}_{k}(x)$,
$\|1_{\mc{T}_{k}(x)}f_{k + 1, \tau_k}\|_{L^{\infty}(\F^2)}^{2} = |(1_{\mc{T}_{k}(x)}f_{k + 1, \tau_k})(x)|^{2}$
and so \eqref{lem73eq1} is
\begin{align*}
\leq \ld^{-1} \sum_{\tau_k \subset \tau}\sum_{\substack{T_k \in \T(\tau_k)\\\|1_{T_k}f_{k + 1, \tau_k}\|_{L^{\infty}(\F^2)} > \ld}}|(1_{T_k}f_{k + 1, \tau_k})(x)|^{2}\leq \ld^{-1}g_{k}(x)
\end{align*}
which completes the proof of the lemma.
\end{proof}

\begin{lem}\label{lem:3.27}
Let $k = 1, 2, \ldots, J - 1$ and $|\tau| \geq R_{k}^{-1/2}$. Then for $x \in \Om_k$,
\begin{align*}%
|f_{\tau}(x) - \sum_{\tau_k \subset \tau}f_{k + 1, \tau_k}(x)| \lsm \ld^{-1}\frac{\log R}{\log\log R}\nms{g_{J}}_{L^{\infty}(\F^2)}.
\end{align*}
\end{lem}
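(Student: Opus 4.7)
The plan is to telescope the difference $f_\tau - f_{k+1,\tau}$ through the pruning scales, bound each piece via the same mechanism as in Lemma~\ref{lem:3.22}, and then use the hypothesis $x \in \Omega_k$ together with the Low Lemma to compare all of the resulting $g_j$'s to $g_J$.

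\textbf{Step 1 (Telescoping).} Since Fourier restriction to $\tau$ commutes with the pruning, and since each $f_{j+1}$ has Fourier support in the relevant strips,
\[
f_\tau - f_{k+1,\tau} = (f-f_J)_\tau + \sum_{j=k+1}^{J-1} (f_{j+1}-f_j)_\tau,
\]
and for each $j=k+1,\dots,J-1$, $(f_{j+1}-f_j)_\tau = \sum_{\tau_j\subset\tau} f_{j+1,\tau_j} - \sum_{\tau_j\subset\tau} f_{j,\tau_j}$, which by Lemma~\ref{lem:3.22} is bounded in absolute value at $x$ by $\lambda^{-1} g_j(x)$. For the ``top'' term $(f-f_J)_\tau$, observe that for each $\tau_J\subset\tau$, $f_{\tau_J} - f_{J,\tau_J} = \sum_{T_J\in\T(\tau_J),\, \|1_{T_J}f_{\tau_J}\|_{L^\infty}>\lambda} 1_{T_J}f_{\tau_J}$. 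At the point $x$, only the unique $T_J \ni x$ contributes, and the bound $|(1_{T_J}f_{\tau_J})(x)| = \|1_{T_J}f_{\tau_J}\|_{L^\infty} > \lambda$ gives $|(1_{T_J}f_{\tau_J})(x)| \leq \lambda^{-1} |f_{\tau_J}(x)|^2$ (this is exactly the Lemma~\ref{lem:3.22} trick). Summing over $\tau_J\subset\tau$ yields $|(f-f_J)_\tau(x)| \leq \lambda^{-1} g_J(x)$. Altogether,
\[
|f_\tau(x) - f_{k+1,\tau}(x)| \leq \lambda^{-1} \sum_{j=k+1}^{J} g_j(x).
\]

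\textbf{Step 2 (Using $x \in \Omega_k$).} By construction of $\Omega_k$, the point $x$ lies outside $\Omega_{k+1}\cup\cdots\cup\Omega_{J-1}$, so for every $j=k+1,\dots,J-1$ one has $g_j(x) > (\log R)\, g_j^h(x) = (\log R)(g_j(x) - g_j^l(x))$, hence
\[
g_j(x) \leq (1-(\log R)^{-1})^{-1} g_j^l(x) \leq (1-(\log R)^{-1})^{-1} g_{j+1}(x)
\]
by the Low Lemma (Lemma~\ref{201218lem4_4}). Iterating from $j$ up to $J$,
\[
g_j(x) \leq (1-(\log R)^{-1})^{-(J-j)}\, g_J(x) \qquad (j=k+1,\dots,J).
\]
Since $J-j \leq N \lsm (\log R)/r \lsm (\log R)/\log\log R$ and $(1-(\log R)^{-1})^{-N} = 1+O(1/\log\log R) = O(1)$, each factor $(1-(\log R)^{-1})^{-(J-j)}$ is $O(1)$.

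\textbf{Step 3 (Summation).} Combining the two steps, and using that the number of terms in $\sum_{j=k+1}^J$ is at most $J \leq N \lsm (\log R)/\log\log R$,
\[
|f_\tau(x) - f_{k+1,\tau}(x)| \leq \lambda^{-1} \sum_{j=k+1}^J g_j(x) \lsm \lambda^{-1} \cdot \frac{\log R}{\log\log R} \cdot g_J(x) \leq \lambda^{-1} \cdot \frac{\log R}{\log\log R} \cdot \nms{g_J}_{L^\infty(\F^2)},
\]
which is the claimed bound.

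The main obstacle is really bookkeeping: one must verify that the telescope is valid (i.e.\ that restricting to $\tau$ commutes with the pruning, which uses that $f_j$ has Fourier support at scale $j$ and $|\tau|\geq R_k^{-1/2}\geq R_j^{-1/2}$), and that the number-of-scales factor $J-k$ together with the geometric blow-up $(1-(\log R)^{-1})^{-N}$ really stays at size $(\log R)/\log\log R$ — this is exactly why $r \sim \log\log R$ was chosen in Section~\ref{log12:setup}.
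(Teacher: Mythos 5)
Your proof is correct and follows essentially the same route as the paper: telescope through the pruning scales, bound each increment by $\lambda^{-1}g_j(x)$ via the Lemma~\ref{lem:3.22} mechanism (including the top term $(f-f_J)_\tau$, which the paper likewise handles by the same argument), then use $x\in\Omega_k$ together with the Low Lemma to get $g_j(x)\leq(1-(\log R)^{-1})^{-(J-j)}\|g_J\|_{L^\infty}$ and sum over the $O(\log R/\log\log R)$ scales. No gaps.
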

\begin{proof}[Proof of Lemma \ref{lem:3.27}]
Fix $x \in \Om_k$.
Since $\sum_{\tau_k \subset \tau}f_{\tau_k} = f_{\tau} = \sum_{\tau_{k- 1} \subset \tau}f_{\tau_{k - 1}}$, we have
\begin{align*}
|f_{\tau}(x) - \sum_{\tau_k \subset \tau}f_{k + 1, \tau_k}(x)| & \leq |f_{\tau}(x) - \sum_{\tau_{J} \subset \tau}f_{J, \tau_{J}}(x)| + \sum_{j = k + 1}^{J - 1}|\sum_{\tau_{j} \subset \tau}f_{j + 1, \tau_j}(x) - \sum_{\tau_j \subset \tau}f_{j, \tau_j}(x)|\\
 &\leq \ld^{-1}\sum_{j = k + 1}^{J}g_{j}(x)
\end{align*}
by Lemma \ref{lem:3.22} (by how $f_{J}$ is defined, the $f_{\tau} - \sum_{\tau_{J} \subset \tau}f_{J, \tau_{J}}$ term is controlled by the same proof as in Lemma \ref{lem:3.22}).

To control this sum, we now use the definition of $\Om_k$.
The low lemma gives
$$g_{j}(x) = g_{j}^{l}(x) + g_{j}^{h}(x) \leq g_{j + 1}(x) + g_{j}^{h}(x).$$
Since $x \in \Om_k$, for $j = k + 1, \ldots, J - 1$, this is then $\leq g_{j + 1}(x) + (\log R)^{-1} g_{j}(x)$
and hence
\begin{align*}
g_{j}(x) \leq  (1-(\log R)^{-1})^{-1} g_{j + 1}(x).
\end{align*}
Therefore for $j = k + 1, \ldots, J - 1$,
\begin{align*}
g_{j}(x) \leq (1-(\log R)^{-1})^{-(J - j)} \|g_{J}\|_{L^{\infty}(\F^2)}.
\end{align*}
Thus
\begin{align*}
\ld^{-1}\sum_{j = k + 1}^{J} g_{j}(x) &\leq \ld^{-1}\|g_{J}\|_{L^{\infty}(\F^2)}\sum_{j = k + 1}^{J} (1-(\log R)^{-1})^{-(J - j)}\\
& \lsm \ld^{-1}\frac{\log R}{\log \log R}\|g_{J}\|_{L^{\infty}(\F^2)}
\end{align*}
which completes the proof of Lemma \ref{lem:3.27}.
\end{proof}
Note that the above proof also works for $x \in L$ and we obtain the same conclusion.

Now choose
\begin{align}\label{eq:lambda}
\ld := (\log R)^2 q^{r/2} \frac{\|g_{J}\|_{L^{\infty}(\F^2)}}{\alpha}.
\end{align}
We can write the conclusion of Lemma \ref{lem:3.27} as for $x \in \Om_k$ and $|\tau| \geq R_{k}^{-1/2}$, we have
\begin{align*}
f_{\tau}(x) = f_{k + 1, \tau}(x) + O((\log R)^{-1} q^{-r/2} (\log \log R)^{-1}\alpha)
\end{align*}
and so for $x \in \Om_k$ and $\tau_1, \tau_{1}'$ disjoint intervals of length $R_{1}^{-1/2}$,
\begin{align*}
|f_{\tau_1}(x)f_{\tau_{1}'}(x)| = &|f_{k + 1, \tau_{1}}(x)f_{k + 1, \tau_{1}'}(x)|\\
& + O\bigg(\frac{\alpha}{(\log R) q^{r/2} \log\log R}(|f_{\tau_{1}}(x)| + |f_{\tau_{1}'}(x)|) + \frac{\alpha^2}{(\log R)^{2} q^r (\log\log R)^2}\bigg).
\end{align*}
Since $x \in U_{\alpha}(f)$, we control the $|f_{\tau_1}(x)|$ and $|f_{\tau_{1}'}(x)|$ by the $l^6$ sum over all such $\tau_1$ caps and thus by $(\log R)q^{r/2}\alpha$.
This gives that for $x \in U_{\alpha}(f) \cap \Om_k$,
\begin{align*}
|f_{\tau_1}(x)f_{\tau_{1}'}(x)| = |f_{k + 1, \tau_1}(x)f_{k + 1, \tau_{1}'}(x)| + O(\frac{\alpha^2}{\log\log R}).
\end{align*}
This implies for $x \in U_{\alpha}(f) \cap \Om_k$ and $R$ sufficiently large,
\begin{align*}
\max_{\tau_1 \neq \tau_{1}'}|f_{\tau_1}(x)f_{\tau_{1}'}(x)|^{2} \lsm \max_{\tau_{1} \neq \tau_{1}'}|f_{k + 1, \tau_1}(x)f_{k + 1, \tau_{1}'}(x)|^{2}
\end{align*}
which gives
\begin{align}\label{eq:pruned}
\alpha^{4}|U_{\alpha}(f) \cap \Om_k| \lsm \nms{\max_{\tau_{1} \neq \tau_{1}'}|f_{k + 1, \tau_1}(x)f_{k + 1, \tau_{1}'}(x)|^{1/2}}_{L^{4}(U_{\alpha}(f) \cap \Om_k)}^{4}.
\end{align}

Similarly, Lemma \ref{lem:3.22} with $k = 1$ implies $|f_{2, \tau_1}(x) - f_{1, \tau_{1}}(x)| \leq \ld^{-1}g_{1}(x)$
and the beginning of the proof of Lemma \ref{lem:3.27} implies
$|f_{\tau_1}(x) - f_{2, \tau_1}(x)| \leq \ld^{-1}\sum_{j = 2}^{J} g_{j}(x)$.
Following the proof of Lemma \ref{lem:3.27} and the choice of $\ld$ in \eqref{eq:lambda}
shows that for $x \in L$,
\begin{align*}
f_{\tau_1}(x) = f_{1, \tau_{1}}(x) + O((\log R)^{-1} q^{-r/2} (\log\log R)^{-1}\alpha)
\end{align*}
from which following the same reasoning as in the $\Om_k$ case, we obtain that
\begin{align}\label{eq:prunedlow}
\alpha^{6}|U_{\alpha}(f) \cap L| \lsm \nms{\max_{\tau_{1} \neq \tau_{1}'}|f_{1, \tau_{1}}(x)f_{1, \tau_{1}'}(x)|^{1/2}}_{L^{6}(U_{\alpha}(f) \cap L)}^{6}.
\end{align}
In light of \eqref{eq:goal2}, it remains to estimate the right hand sides of \eqref{eq:pruned} and \eqref{eq:prunedlow}.

\subsection{Estimating $\alpha^6 |U_{\alpha}(f) \cap \Omega_k|$ for $k = 1, \dots, J-1$}

We first recall the following bilinear restriction theorem whose proof we defer to the end of this section.

\begin{lem}[Bilinear restriction] \label{lem_bilinear}
Suppose $\delta \in q^{-2\N}$, and for $i = 1,2$, $f_i$ is a function on $\F^2$ whose Fourier support is contained in
$ %
\{(\xi,\eta) \colon \xi \in I_i, |\eta - \xi^2| \leq \delta\},
$ %
where $I_1$, $I_2$ are intervals in $\O$ (not necessarily of the same length) separated by a distance~$\kappa$. Assume
\begin{equation} \label{assump}
\kappa \geq \delta^{1/2}.
\end{equation}
Then
\begin{equation} \label{bilrest}
\int_{\F^2} |f_1 f_2|^2 \leq \frac{\delta^2}{\kappa} \int_{\F^2} |f_1|^2 \int_{\F^2} |f_2|^2.
\end{equation}
\end{lem}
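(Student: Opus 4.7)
The plan is to adapt the standard $L^2$ bilinear restriction argument to the $q$-adic setting. First I would apply Plancherel together with the convolution theorem to write $\int_{\F^2}|f_1 f_2|^2 = \|\widehat{f_1} \ast \widehat{f_2}\|_{L^2(\F^2)}^2$, and then apply Cauchy--Schwarz pointwise to the convolution integrand. This reduces matters to a uniform upper bound on the fiber set
\[
\Sigma(\xi, \eta) := \{(\xi_1, \eta_1) \in \F^2 : (\xi_1, \eta_1) \in \supp \widehat{f_1},\ (\xi - \xi_1, \eta - \eta_1) \in \supp \widehat{f_2}\};
\]
indeed, once one shows $|\Sigma(\xi,\eta)| \leq \delta^2/\kappa$ uniformly in $(\xi, \eta)$, Fubini and Plancherel combine to yield $\|f_1 f_2\|_2^2 \leq (\delta^2/\kappa) \|f_1\|_2^2 \|f_2\|_2^2$, which is \eqref{bilrest}.

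To bound $|\Sigma(\xi,\eta)|$, I would integrate first in $\eta_1$ and then in $\xi_1$. For each $\xi_1$ satisfying $\xi_1 \in I_1$ and $\xi - \xi_1 \in I_2$, the admissible $\eta_1$ form the intersection of two $q$-adic balls of radius $\delta$; by the ultrametric inequality this intersection is either empty or a single ball of measure $\delta$, and it is nonempty precisely when $|g(\xi_1) - \eta| \leq \delta$, where $g(\xi_1) := \xi_1^2 + (\xi - \xi_1)^2$. Hence it suffices to show that $\{\xi_1 \in I_1 \cap (\xi - I_2) : |g(\xi_1) - \eta| \leq \delta\}$ has measure at most $\delta/\kappa$. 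For two such $\xi_1, \xi_1'$, setting $\xi_2 := \xi - \xi_1 \in I_2$, a direct expansion yields
\[
g(\xi_1) - g(\xi_1') = 2(\xi_1 - \xi_1')(\xi_1 + \xi_1' - \xi) = 2(\xi_1 - \xi_1')(\xi_1' - \xi_2).
\]
Since $\xi_1' \in I_1$ and $\xi_2 \in I_2$ lie in disjoint ultrametric balls, the distance $|\xi_1' - \xi_2|$ equals $\kappa$ exactly, and $|2| = 1$ since $q$ is odd; hence $|g(\xi_1) - g(\xi_1')| = \kappa |\xi_1 - \xi_1'|$. Combined with $|g(\xi_1) - g(\xi_1')| \leq \delta$ (once more by the ultrametric inequality), this forces $|\xi_1 - \xi_1'| \leq \delta/\kappa$, so all such $\xi_1$ sit in a single $q$-adic ball of radius $\delta/\kappa$, giving the required bound.

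The argument is essentially routine once set up correctly; the main subtlety is in exploiting the rigidity of $q$-adic geometry. Two ingredients are crucial: the exact equality $|\xi_1' - \xi_2| = \kappa$ (rather than the mere inequality $\geq \kappa$) between any two points in disjoint ultrametric balls, which keeps the product formula for $|g(\xi_1) - g(\xi_1')|$ sharp; and $|2|_q = 1$ for odd $q$, which prevents any loss from the parabolic curvature computation. The hypothesis $\kappa \geq \delta^{1/2}$ then ensures $\delta/\kappa \leq \delta^{1/2}$, making the resulting bound nontrivial compared to the trivial estimate one would get from a single cap alone; I expect this is the regime in which the estimate is used downstream.
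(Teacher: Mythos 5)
Your proof is correct, but it runs along a genuinely different track from the one in the paper. You work entirely on the Fourier side: Plancherel turns $\int|f_1f_2|^2$ into $\|\wh{f_1}*\wh{f_2}\|_{L^2}^2$, Cauchy--Schwarz reduces everything to a uniform bound on the fiber $\Sigma(\xi,\eta)$, and the fiber bound $|\Sigma(\xi,\eta)|\leq \delta^2/\kappa$ follows from the ultrametric computation $|g(\xi_1)-g(\xi_1')| = |\xi_1-\xi_1'|\,|\xi_1'-\xi_2|\geq \kappa|\xi_1-\xi_1'|$. All of these steps check out (including the equality case of the ultrametric inequality for points in disjoint balls, though for the upper bound on $|\xi_1-\xi_1'|$ you only ever need the lower bound $|\xi_1'-\xi_2|\geq\kappa$, so the "exact equality" is not actually load-bearing). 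The paper instead decomposes each $f_i$ into caps $\theta_i$ of length $\delta^{1/2}$, proves a biorthogonality statement ($\theta_1=\theta_1'$, $\theta_2=\theta_2'$) for the cross terms --- and it is precisely here that the hypothesis \eqref{assump} is used, to force $|\xi_1-\xi_3|\leq\delta/\kappa\leq\delta^{1/2}$ --- and then passes to physical space, using the wave packet structure of Corollary \ref{cor:wavepacket} and the tube-intersection bound of Lemma \ref{lem:tubeintersect} to evaluate $\int|f_{1,\theta_1}|^2|f_{2,\theta_2}|^2$. Your route is shorter, avoids the wave packet machinery entirely, and in fact never invokes \eqref{assump}: your argument proves \eqref{bilrest} for any positive separation $\kappa$, so the lemma as stated is slightly weaker than what you establish (the hypothesis matters only for how the bound is applied downstream, as you correctly guess). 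What the paper's approach buys is the explicit physical-space picture --- transverse tubes with small intersection --- which is the standard geometric way to see bilinear restriction and which reuses the structural lemmas already set up in Section 2; your approach buys brevity, elementarity, and a marginally more general statement.
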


Fix $k = 1, 2, \dots, J - 1$ below. Then \eqref{eq:pruned} is  bounded by
\begin{equation} \label{eq:pruned2}
\sum_{\tau_1 \ne \tau_1'} \int_{\Omega_k} |f_{k+1,\tau_1} f_{k+1,\tau_1'}|^2.
\end{equation}
Since $g_k$ and $g_k^h$ are constant on squares of side length $R_{k}^{1/2}$, we may partition $\Om_k$ into squares $Q$ of side length $R_{k}^{1/2}$, and integrate on each such $Q$ before we sum over $Q$. If $k \ge 2$, then the Fourier supports of $f_{k+1,\tau_1} 1_Q$ and $f_{k+1,\tau_1'} 1_Q$ are contained in $\Xi_{R_k^{-1/2}}$, while the distance between $\tau_1$ and $\tau_1'$ is $> R_1^{-1/2}$. Since $R_1^{-1/2} \geq (R_k^{-1/2})^{1/2}$ and \eqref{eq:Rk1/4} holds, the hypothesis of Lemma~\ref{lem_bilinear} is satisfied with $\kappa = R_1^{-1/2}$ and $\delta = R_k^{-1/2}$. From \eqref{bilrest}, we then obtain
\[
\int_Q |f_{k+1,\tau_1} f_{k+1,\tau_1'}|^2 \leq \frac{(R_k^{-1/2})^2}{R_1^{-1/2}} \int_Q |f_{k+1,\tau_1}|^2 \int_Q |f_{k+1,\tau_1'}|^2 = \frac{q^{r/2}}{|Q|} \int_Q |f_{k+1,\tau_1}|^2 \int_Q |f_{k+1,\tau_1'}|^2.
\]
The same inequality holds for $k=1$, because then $|f_{k+1,\tau_1}|$ and $|f_{k+1,\tau_1'}|$ are constants on squares of side length $R_1^{1/2}$.
Thus in either case, \eqref{eq:pruned2} is controlled by
\begin{align*}
\sum_{Q \in P_{R_{k}^{1/2}}(\Om_k)}\sum_{\tau_1 \neq \tau_{1}'}\int_{Q}|f_{k + 1, \tau_{1}}f_{k + 1, \tau_{1}'}|^{2}
&\leq q^{r/2}\sum_{Q \in P_{R_{k}^{1/2}}(\Om_k)}\frac{1}{|Q|} \sum_{\tau_{1} \ne \tau_{1}'} \int_{Q}|f_{k + 1, \tau_{1}}|^2 \int_{Q}|f_{k + 1, \tau_{1}'}|^2\\
&\leq q^{r/2}\sum_{Q \in P_{R_{k}^{1/2}}(\Om_k)}\frac{1}{|Q|}(\sum_{\tau_{1}}\int_{Q}|f_{k + 1, \tau_{1}}|^{2})^{2}
\end{align*}
where here $P_{R_{k}^{1/2}}(\Om_k)$ denotes the partition of $\Om_k$ into squares of side length $R_{k}^{1/2}$.
Since $Q$ has side length $R_{k}^{1/2}$, Plancherel and the definition of $g_{k}$ then controls this by
\begin{align*}
q^{r/2}\sum_{Q \in P_{R_{k}^{1/2}}(\Om_k)}\frac{1}{|Q|}(\sum_{\tau_k}\int_{Q}|f_{k + 1, \tau_k}|^{2})^{2} = q^{r/2}\sum_{Q \in P_{R_{k}^{1/2}}(\Om_k)}\frac{1}{|Q|}(\int_{Q}g_{k})^{2} = q^{r/2}\int_{\Om_k}g_{k}^{2}
\end{align*}
where the last equality is because $g_k$ is constant on squares of size $R_{k}^{1/2}$.

Therefore we have shown that
\begin{align*}
\alpha^{4}|U_{\alpha}(f) \cap \Om_k| \lsm q^{r/2} \int_{\Om_k}g_{k}^{2}.
\end{align*}
Using that we are in $\Om_k$ and applying the high lemma, this is controlled by
\begin{align} \label{eq:7.55}
(\log R)^2 q^{r/2} \int_{\Om_k}|g_{k}^{h}|^{2} \leq (\log R)^2 q^r \sum_{\tau_k}\int_{\F^2}|f_{k + 1, \tau_k}|^{4}
\end{align}
Write $f_{k+1,\tau_k} = \sum_{\tau_{k+1} \subset \tau_k} f_{k+1,\tau_{k+1}}$.
Note that the sum has $R_k^{-1/2}/R_{k+1}^{-1/2}$ terms. Using H\"{o}lder's inequality, we further obtain that
\begin{align*}
\eqref{eq:7.55} &\le (\log R)^2 q^r  (\frac{R_k^{-1/2}}{R_{k+1}^{-1/2}})^{3}  \sum_{\tau_{k + 1}} \int_{\F^2} |f_{k+1,\tau_{k+1}}|^4\\
& = (\log R)^2 q^{5r/2} \sum_{\tau_{k + 1}}\int_{\F^2}|f_{k + 1, \tau_{k + 1}}|^{4}\\
& = (\log R)^2 q^{5r/2} \sum_{\tau_{k + 1}}\int_{\F^2}\sum_{\st{T_{k + 1} \in \T(\tau_{k + 1})\\\|1_{T_{k + 1}}f_{k + 2, \tau_{k + 1}}\|_{L^{\infty}(\F^2)}\leq \ld}}|1_{T_{k + 1}}f_{k + 2, \tau_{k + 1}}|^{4}
\end{align*}
where in the last equality we have used that each $x \in \F^2$ is contained in exactly one $T_{k + 1} \in \T(\tau_{k + 1})$. Here we have also used the convention that if $k = J - 1$, then $f_{k + 2}$ is just $f$.
Applying the definition of $f_{k + 1}$ shows that this is
\begin{align}\label{eq:lambda2appear}
\leq (\log R)^2 q^{5r/2} \ld^{2}\sum_{\tau_{k + 1}}\int_{\F^2}|f_{k + 2, \tau_{k + 1}}|^{2} = (\log R)^2 q^{5r/2} \ld^{2}\int_{\F^2}|f_{k + 2}|^{2} \leq (\log R)^2 q^{5r/2} \ld^{2}\int_{\F^2}|f|^{2}
\end{align}
where the last inequality is by \eqref{eq:increase}.
Using \eqref{eq:lambda} then shows that we have proved
\begin{align*}
\alpha^{4}|U_{\alpha}(f) \cap \Om_k| \lsm (\log R)^6 q^{7r/2} \alpha^{-2} \nms{g_{J}}^{2}_{L^{\infty}(\F^2)} \int_{\F^2}|f|^{2}.
\end{align*}
It follows that
\begin{align} \label{eq:7.5conclusion}
\alpha^{6}|U_{\alpha}(f) \cap \Om_k| \lsm (\log R)^6 q^{7r/2} (\sum_{\tau_{J}}\|f_{\tau_{J}}\|_{L^{\infty}(\F^2)}^{2})^{2}\sum_{\tau_{J}}\|f_{\tau_{J}}\|_{L^{2}(\F^2)}^{2}.
\end{align}

\subsection{Estimating $\alpha^6 |U_{\alpha}(f) \cap L|$}

The right hand side of \eqref{eq:prunedlow} is
\begin{align}\label{eq:lowdomeq1}
&\leq \int_{L}(\sum_{\tau_1}|f_{1, \tau_1}|^{2})^{3}\leq \int_{L}(\sum_{\tau_1}|f_{2, \tau_1}|^{2})^{3}= \int_{L}g_{1}^{2}\sum_{\tau_{1}}|f_{2, \tau_{1}}|^{2}
\end{align}
where the second inequality is by \eqref{eq:prunedmon}.
For $x \in L$ and $k = 1, \dots, J-1$, we have
\begin{equation*}
g_k(x) \leq (1-(\log R)^{-1})^{-1} g_{k+1}(x)
\end{equation*}
so
\begin{equation*}
g_1(x) \lesssim \sum_{\tau_{J}} |f_{\tau_{J}}(x)|^2.
\end{equation*}
Therefore this and \eqref{eq:increase} shows that \eqref{eq:lowdomeq1} is
\begin{align*}
\lsm (\sum_{\tau_{J}}\nms{f_{\tau_{J}}}_{L^{\infty}(\F^2)}^{2})^{2}\int_{\F^2}|f|^{2}.
\end{align*}
It follows that
\begin{align} \label{eq:7.6conclusion}
\alpha^{6}|U_{\alpha}(f) \cap L| \lsm (\sum_{\tau_{J}}\|f_{\tau_{J}}\|_{L^{\infty}(\F^2)}^{2})^{2}\sum_{\tau_{J}}\|f_{\tau_{J}}\|_{L^{2}(\F^2)}^{2}.
\end{align}

Finally, we may sum \eqref{eq:7.5conclusion} over $k=1,\dots,J-1$ with \eqref{eq:7.6conclusion}. Since $J \leq N \lesssim \log R$, this concludes the proof of Proposition~\ref{prop:newprop34}, modulo the proof of Lemma~\ref{lem_bilinear}.

\subsection{Proof of Lemma~\ref{lem_bilinear}}
Decompose
\begin{equation*}
f_i = \sum_{\substack{\theta_i \subset I_i \\ |\theta_i| = \delta^{1/2}}} f_{i,\theta_i}.
\end{equation*}
Then by Plancherel,
\begin{equation*}
\int_{\F^2} |f_1 f_2|^2
= \sum_{\theta_1, \theta_1', \theta_2, \theta_2'} \int_{\F^2} f_{1,\theta_1} f_{2,\theta_2} \cdot \overline{ f_{1,\theta_1'} f_{2,\theta_2'} }
= \sum_{\theta_1, \theta_1', \theta_2, \theta_2'} \int_{\F^2} (\widehat{f_{1,\theta_1}}*\widehat{f_{2,\theta_2}}) \cdot  \overline{(\widehat{f_{1,\theta_1'}}*\widehat{f_{2,\theta_2'}})}.
\end{equation*}
For the last integral to be non-zero, the support of $\widehat{f_{1,\theta_1}}*\widehat{f_{2,\theta_2}}$ must intersect the support of $\widehat{f_{1,\theta_1'}}*\widehat{f_{2,\theta_2'}}$.
Thus we can find $(\xi_i, \eta_i)$, $i = 1, 2, 3, 4$ such that
$\xi_1 + \xi_2 = \xi_3 + \xi_4$ and $\eta_1 + \eta_2 = \eta_3 + \eta_4$
where $|\eta_{i} - \xi_i^{2}| \leq \delta$ and $\xi_1 \in \ta_1$, $\xi_2 \in \ta_2$,
$\xi_3 \in \ta_{1}'$, and $\xi_4 \in \ta_{2}'$.
Hence by the ultrametric inequality, for this $(\xi_1, \ldots, \xi_4)$, we have
\begin{equation} \label{linear}
\xi_1 + \xi_2 - (\xi_3 + \xi_4) = 0
\end{equation}
\begin{equation} \label{quad}
|\xi_1^2 + \xi_2^2 - (\xi_3^2 + \xi_4^2)| \leq \delta.
\end{equation}
From \eqref{linear}, we have $\xi_1 - \xi_4 = -(\xi_2-\xi_3)$, so we see from \eqref{quad} that
\begin{equation*}
|\xi_1-\xi_4| |\xi_1+\xi_4-(\xi_2+\xi_3)| \leq \delta.
\end{equation*}
But \eqref{linear} also implies $\xi_1+\xi_4-(\xi_2+\xi_3) = 2 (\xi_1 - \xi_3)$. Since $q$ is an odd prime, we have
\begin{equation*}
|\xi_1-\xi_4| |\xi_1 - \xi_3| \leq \delta.
\end{equation*}
Since $|\xi_1 - \xi_4| \geq \kappa$, this shows
\begin{equation*}
|\xi_1 - \xi_3| \leq \frac{\delta}{\kappa}.
\end{equation*}
If $\delta / \kappa \leq \delta^{1/2}$, i.e. \eqref{assump} holds,
then $|\xi_1 - \xi_3| \leq \delta^{1/2}$.
Since $\ta_1$ and $\ta_{1}'$ are intervals of length $\delta^{1/2}$
and two $q$-adic intervals of the same length are either disjoint or equal, we must have $\theta_1 = \theta_1'$. Using \eqref{linear} again then implies $\theta_2 = \theta_2'$.

This shows
\begin{equation*}
\int_{\F^2} |f_1 f_2|^2 = \sum_{\theta_1, \theta_2} \int_{\F^2} |\widehat{f_{1,\theta_1}}*\widehat{f_{2,\theta_2}}|^2 = \sum_{\theta_1, \theta_2} \int_{\F^2} |f_{1,\theta_1}|^2 |f_{2,\theta_2}|^2.
\end{equation*}
Now for $i = 1,2$, we may expand
\begin{equation*}
|f_{i,\theta_i}|^2 = \sum_{T_i \in \T(\theta_i)} |c_{T_i}|^2 1_{T_i}
\end{equation*}
as in Corollary \ref{cor:wavepacket}, so that $\sum_{T_i \in \T(\theta_i)} |c_{T_i}|^2 |T_i| = \int_{\F^2} |f_{i,\theta_i}|^2$.
Thus
\begin{equation*}
\begin{split}
\int_{\F^2} |f_{1,\theta_1}|^2 |f_{2,\theta_2}|^2
&= \int_{\F^2} \sum_{T_1 \in \T(\theta_1)} |c_{T_1}|^2 1_{T_1} \sum_{T_2 \in \T(\theta_2)} |c_{T_2}|^2 1_{T_2} \\
&= \sum_{T_1 \in \T(\theta_1)} \sum_{T_2 \in \T(\theta_2)} |c_{T_1}|^2 |c_{T_2}|^2 |T_1 \cap T_2|
\end{split}
\end{equation*}
Using the definition of $\kappa$, and Lemma \ref{lem:tubeintersect}, we see that
\begin{equation*}
|T_1 \cap T_2| \leq \delta^{-1/2} \cdot \frac{\delta^{-1/2}}{\kappa} = \frac{\delta^2}{\kappa} |T_1| |T_2| \quad \text{for all $T_1 \in \T(\theta_1)$, $T_2 \in \T(\theta_2)$},
\end{equation*}
so
\begin{equation*}
\int_{\F^2} |f_{1,\theta_1}|^2 |f_{2,\theta_2}|^2 \leq \frac{\delta^2}{\kappa} \int_{\F^2} |f_{1,\theta_1}|^2 \int_{\F^2} |f_{2,\theta_2}|^2.
\end{equation*}
Summing over $\theta_1$ and $\theta_2$ on both sides, we yield
\begin{equation*}
\int_{\F^2} |f_1 f_2|^2 \leq \frac{\delta^2}{\kappa} \int_{\F^2} |f_1|^2 \int_{\F^2} |f_2|^2,
\end{equation*}
as desired.

\bibliographystyle{amsplain}
\bibliography{log-bibliography}
\end{document}